\newtheorem{theorem}{Theorem}[section]
\newtheorem{lemma}{Lemma}[section]
\newtheorem{proposition}{Proposition}[section]
\newtheorem{corollary}{Corollary}[section]
\theoremstyle{definition}
\newtheorem{definition}{Proposition-Definition}[section]
\theoremstyle{remark}
\newtheorem{remark}{Remark}[section]
\numberwithin{equation}{section}
\DeclareMathOperator{\supp}{supp} 
\begin{document}
%

\newcommand\gs{\mathcal{T},\sigma}
\newcommand\nugs{\nu_{\mathcal{T},\sigma}}
\newcommand\nur{\nu_{\rho}}
\newcommand\nurgs{\nu_{\rho,\mathcal{T},\si}}
\newcommand\jr{j_{\rho}}
\newcommand\ijkr{I_{j,k,\rho}}
\newcommand\iprr{I_{p,r,\rho}}
\newcommand\ijkrd{[kb^{-j}, kb^{-j}+b^{-j\rho})}
\newcommand\hnu{h_\nu}
\newcommand\hnur{h_{\nu_\rho}}
\newcommand\hnugs{h_{\nu_{\mathcal{T},\sigma}}}
\newcommand\hnurgs{h_{\nurgs}}
\newcommand\ehnu{E_h^\nu}
\newcommand\ehnur{E_h^{\nur}}
\newcommand\ehnugs{E_h^{\nugs}}
\newcommand\ehnurgs{E_h^{\nurgs}}
\newcommand\fhgs{F_{h,\mathcal{T},\si}}
\newcommand\fhpgs{F_{h',\mathcal{T},\si}}
\newcommand\fhrgs{F_{h,\rho,\mathcal{T},\si}}
\newcommand\fhprgs{F_{h',\rho,\mathcal{T},\si}}
\newcommand\almu{\alpha_{\mu}}
\newcommand\ealmu{E_\alpha^{\mu}}
\newcommand\ealmub{\overline{E}_\alpha^{\mu}}
\newcommand\ealmud{E_{\alpha,\delta}^{\mu}}
\def\ga{\mathcal{T}}
\def\si{\sigma}
\def\dnu{d_\nu}
\def\dmub{\overline{d}_\mu}
\def\dnur{d_{\nur}}
\def\dnugs{d_{\nugs}}
\def\dnurgs{d_{\nurgs}}
\def\dmu{d_\mu}
\def\dmut{\tilde{d_\mu}}
\newcommand\ealmut{\widetilde{E}_\alpha^{\mu}}
\newcommand\ealmutd{\widetilde{E}_{\alpha,\delta}^{\mu}}
\newcommand\almum{\underline{\alpha}_{\mu}}
\newcommand\almuu{\overline{\alpha}_{\mu}}
\newcommand\fald{\widetilde{F}_{\alpha,\delta}^{\mu}}
\def\2mj{b^{-j}} 
\def\mual{\mu_\alpha}
\def\muald{\mu_{\alpha,\delta}}
\def\hald{h_{\alpha,\delta}}
\newcommand\alp{h_f}
\def\sald{S_{ \alpha,\delta}}
\def\salbdr{S_{\rho,\alpha,\beta,\delta}}
\def\salepdri{S_{\rho,\alpha_i,\alpha_i+\ep,\delta_i}}
\def\pald{{\mathcal P}(\alpha,\delta,\ep)}
\def\paldr{{\mathcal P}(\rho,\alpha,\delta,\ep)}
\def\djk{d_{j,k}}
\def\psijk{\psi_{j,k}}
\def\bproof{\begin{proof}}
\def\eproof{\end{proof}}
\def\ra{\rightarrow}
\def\ma{\mapsto}
\def\un{{\relax\ifmmode 1\!\!1\else$1\!\!1$\fi}}
\newcommand\ho{H\"{o}lder }
\def\alphamin{\alpha_{\min}}
\def\alphamax{\alpha_{\max}}
\def\almu{\alpha_\mu}
\def\almup{\alpha^+_\mu}
\def\almum{\alpha^-_\mu}
\def\lag{\langle}                   
\def\rag{\rangle}                   
\def\ep{\varepsilon}
\def\phijk{\phi_{j,k}}
\def\ijk{I_{j,k}}
\def\bs{\backslash}
\def\equi{\Leftrightarrow}                

\def\pqx{p_{q,x}}
\def\iqx{I_{q}(x)}
\def\iqxp{I_{q}^{+}(x)}
\def\iqxm{I_{q}^{-}(x)}
\def\ipqp{I_{p,q}^{+}}
\def\ipqr{I_{p,q,\rho}}
\def\ipqm{I_{p,q}^{-} }
\def\ipq{I_{p,q}}
\def\iqxp{I_{q}^{+}(x)}
\def\iqxm{I_{q}^{-}(x)}
\def\pqx{p_{q,x}}
\def\pqx{p_{q,x}}
\def\pqxp{p^+_{q,x}}
\def\pqxm{p^-_{q,x}}

\def\ijkb{I^b_{j,k}}
\def\ijkc{I^c_{j,{\bf k}}}
\def\wijkc{\widetilde{I}^c_{j,k}}
\def\hijkc{\widehat{I}^c_{j,k}}
\def\ijx{I_{j}(x)}
\def\ijxp{I_{j}^{+}(x)}
\def\ijxm{I_{j}^{-}(x)}
\def\ijxb{I_{j}^{b}(x)}
\def\ijxc{I_{j}^{c}(x)}
\def\ijyc{I_{j}^{c}(y)}
\def\ijxcd{I_{j}^{c \,(\delta)}(x)}
\def\kjx{k_{j,x}}
\def\kjxb{{\bf k}^b_{j,x}}
\def\kjxc{{\bf k}^c_{j,x}}
\def\kjyc{{\bf k}^c_{j,y}}
\def\kjxp{k^+_{j,x}}
\def\kjxm{k^-_{j,x}}







\title[Birkhoff averages  on "self-affine" symbolic spaces]{Multifractal analysis of Birkhoff averages on "self-affine" symbolic spaces}
\author{Julien Barral}

\address{INRIA Rocquencourt, B.P. 105, 78153 Le
Chesnay Cedex, France}
\email{julien.barral@inria.fr}
\author{Mounir Mensi}
\address{Facult\'e des Sciences de Monastir, 5019 Monastir, Tunisia}
\email{mounirmensi@yahoo.fr}
\keywords{Multifractal analysis, Birkhoff averages, Gibbs measures, Self-affine sets}
\subjclass[2000]{28A78, 28A80, 37D35, 37D40}

\begin{abstract}
We achieve on self-affine Sierpinski carpets the multifractal analysis of the Birkhoff averages of potentials satisfying a Dini condition.  Given such a potential, the corresponding Hausdorff spectrum cannot be deduced from that of the associated Gibbs measure by a simple transformation. Indeed, these spectra are respectively obtained as the Legendre transform of two distinct concave differentiable functions that cannot be deduced from one another by a dilation and a translation. This situation is in contrast with what is observed in the familiar self-similar case. Our results are presented in the framework of almost-multiplicative functions on products of two distinct symbolic spaces and their projection on the associated self-affine carpets.
\end{abstract}
\maketitle
\section{Introduction}

Let $\varphi$ be a real valued continuous potential defined on the one sided symbolic space $\mathbb{A}$ over a finite alphabet $A$ of cardinality $\# A\ge 2$, endowed with the shift operation denoted $\sigma$ and the standard metric. As usual the Birkhoff sums associated with $\varphi$ are defined by $S_n\varphi =\sum_{k=0}^{n-1} \varphi \circ \sigma^k $. The Hausdorff spectrum associated with the asymptotic behavior of the Birkhoff averages of $\varphi$ is the mapping ($\dim$ stands for the Hausdorff dimension and we refer to \cite{Falcbook} or \cite{Pesin} for its definition)
\begin{equation}\label{spec1}
\alpha\in\mathbb{R} \mapsto \dim\, \left\{x\in \mathbb{A}:\ \lim_{n\to\infty}\frac{S_n\varphi(x)}{n}=\alpha\right\}.\end{equation}
This mapping, as well as its analogue when the symbolic space $\mathbb{A}$ is replaced by its geometric realization on a conformal repeller has been studied intensively \cite{Collet,Rand,BMP,PeW,Fan,O,FengLauWu,Ke,FengO} (extensions replacing $S_n\varphi(x)$ by vector valued Birkhoff sums or the logarithm of the norm of Birkhoff products of positive matrices have also been considered \cite{Fan0,Fan1,F}). In all these situations, to finding the Hausdorff spectrum of the Birkhoff averages of $\varphi$ amounts to finding the singularity spectrum of the (possibly weak,  see \cite{Ke,FengO}) Gibbs measure $\mu$ associated with $\varphi$, i.e. the mapping
\begin{equation}\label{spec2}
\alpha\in\mathbb{R}_+ \mapsto \dim\, \left\{x\in \mathbb{A}:\ \lim_{r\to 0^+}\frac{\log \mu(B(x,r))}{\log (r)}=\alpha\right\}.\end{equation}
Indeed, the spectra (\ref{spec1}) and (\ref{spec2}) are deduced from one another by a dilation and a translation. Moreover, the measure $\mu$ obeys the multifractal formalism in the sense that its singularity spectrum is obtained as the Legendre transform $\tau_\mu^*(\alpha)=\inf_{q\in\mathbb{R}}\alpha q-\tau_\mu(q)$ of the $L^q$-spectrum of $\mu$, which is defined by 
$$
\tau_\mu(q)=\liminf_{r\to 0}\ \frac{\log \sup \left\{\sum_i \mu(B_i)^q\right \}}{\log (r)},
$$
where the supremum is taken over all the  families of disjoint closed balls $B_i$ of radius equal to $r$ with centre in $\supp(\mu)$ (see \cite{BMP,Be,Olsen,Pesin,Pe} for detailed expositions of multifractal formalisms).  Besides, $\tau_\mu(q)$ is related to the topological pressure $P(q\varphi)$ of $q\varphi$ by the relation $\tau_\mu(q)=\big (qP(\varphi)-P(q\varphi)\big )/\log (\#A)$. 

The same kind of relation holds between the Hausdorff spectra of Birkhoff averages and Gibbs measures on conformal repellers. 

Symbolic spaces and conformal repellers share the property  to be endowed with a metric distance giving them a self-similar structure, which is responsible for a so simple relation. It is thus natural to investigate the relationship between the Hausdorff spectra of Birkhoff averages and Gibbs measures when this self-similarity property is relaxed. The most natural way to do this consists in endowing the symbolic  space  with a metric distance making its geometry very close to that of a self-affine Sierpinski carpet, in the sense that balls are naturally projected on "almost squares" in the plane, while cylinders are projected on the rectangular cells covering the carpet. 

The multifractal analysis of Gibbs measures on such "self-affine" symbolic spaces (see Section~\ref{self-affine} for the definition) and their projections on self-affine Sierpinski carpets has been achieved in \cite{King,Olsen} for the special case where the potential is constant over the cylinders of the first generation, and in \cite{barmen} for general potentials satisfying a Dini condition. It turns out that the singularity spectrum of the Gibbs measure $\mu$ associated with such a potential $\varphi$ is given by the Legendre transform of a differentiable function $\beta_\varphi$. In general the function $\beta_\varphi$ differs from the $L^q$-spectrum. This difference with respect to the self-similar case recalls and extends a phenomenon arising in the study of the dimensions of self-affine Sierpinski carpets and more general self-affine sets \cite{McMullen,Bedford,PW,KePe,Feng}:  The Hausdorff and box dimensions may be different (see also \cite{Falc1,Falc2,Falc3} for works related to this issue).  

In this paper we complete the work achieved in \cite{barmen}. Given a Dini potential  $\varphi$ on a self-affine symbolic space or on its projection to a self-affine Sierpinski carpet, we perform the multifractal analysis of the Birkhoff averages of $\varphi$. Our results show that in absence of self-similarity, the associated Hausdorff spectra are obtained as the Legendre transform of a differentiable function $\mathcal{T}_\varphi(q)$, which cannot be simply related to $\beta_\varphi(q)$, and like $\beta_\varphi(q)$ cannot be related linearly to the topological pressure of $q\varphi$. Nevertheless, it is worth noting that the differentiability of $\mathcal{T}_\varphi$ is obtained as a consequence of that of $\beta_\varphi$ (see Proposition~\ref{prop4}(2)).

Actually, we adopt a framework  in which Birkhoff sums of potentials appear as special examples among more general objects. Specifically, we achieve on self-affine symbolic spaces the multifractal analysis of the asymptotic behavior of {\it almost multiplicative functions of cylinders} (see Section~\ref{definition} for the definition, properties and examples). Our result, namely Theorem~\ref{th2} (Section~\ref{Th2}), requires to extend the class of Gibbs measures considered in \cite{barmen}. The singularity spectrum of such a measure is provided by Theorem~\ref{th1} (Section~\ref{Th1}). For this result, the ideas of the proof are the same as for the special case studied in \cite{barmen}, but we write in detail some arguments whose verifications, though not immediate, were left to the reader in \cite{barmen}. Theorems~\ref{th2} and~\ref{th1} are proved simultaneously in Section~\ref{proof1}. Our results regarding the multifractal analysis of Birkhoff averages and Gibbs measures on self-affine Sierpinski carpets are respectively described in Theorems~\ref{proj1} and~\ref{proj2} in Section~\ref{proj}. Theorems~\ref{proj1} appears to be a direct consequence of Theorem~\ref{th2}. The multifractal analysis of Gibbs measures on Sierpinski carpets requires to complete the work achieved on the symbolic space. The approach is the same as the approach explained in detail in \cite{barmen} for a smaller class of Gibbs measures, and we will not provide a proof of Theorem~\ref{proj2}.

\section{Almost-multiplicative functions and "self-affine" symbolic spaces}\label{definition}

Let $A$ be a finite set of cardinality $r\ge 2$, $A^*=\bigcup_{k\ge 0}A^k$  the set of finite words built over the alphabet $A$ (with the convention $A^0=\{\emptyset\}$), and $\mathbb{A}=A^{\mathbb{N^*}}$, the symbolic space  over $A$.  

The set $A^*\cup\mathbb{A}$ is endowed with the concatenation operation. If $w\in A^*$, $[w]$ denotes the cylinder $w\cdot \mathbb{A}=\{w\cdot w',\ w'\in\mathbb{A}\}$. The set of cylinders is denoted by $\mathscr{C}_A$ and it is naturally endowed with the concatenation operation $[w]\cdot [w']=[w\cdot w']$. 

The shift operation on $\mathbb{A}$ is denoted by $\sigma$. 

If $z=z_1z_2\cdots  z_p\cdots \in \mathbb{A}$ and $n\in\mathbb{N}$ then $z|n$ stands for the prefix $z_1\cdots z_n$ of $z$ if $n\ge 1$ and the empty word otherwise.

The length of a word $w\in A^*\cup\mathbb{A}$ is denoted by $|w|$. 

For $z,z'\in A^*\cup\mathbb{A}$, let $z\land z'$ stands for the word $u$ of maximal length in $A^*\cup\mathbb{A}$ such that $u$ is a prefix of $z$ and $z'$.  

The set $\mathbb{A}$ is endowed with the ultrametric  distance $d:(z,z')\in \mathbb{A}^2\mapsto r^{-|z\land z'|}$.

We denote by $\lambda$ be the unique measure on $\mathbb{A}$ such that $\lambda([w])=r^{-n}$ for all $n\ge 0$ and  $w\in A^n$. 

If $\{f_i\}_{i\in I}$ and $\{g_i\}_{i\in I}$ are two families of real-valued functions defined on a set $E$, when we say that there exists $C>0$ such that $f_i(x)\approx g_i(x)$ for all $i\in I$ and $x\in E$, that means that there exists $C>0$ such that $C^{-1}f_i(x)\le g_i(x)\le C f_i(x)$ for all $i\in I$ and $x\in E$. When there is no ambiguity, we just write $f_i(x)\approx g_i(x)$ for all $i\in I$ and $x\in E$ to mean that such a constant exists. This notation enables to shorten certain mathematical formulas.

\subsection{\bf Almost-multiplicative functions and quasi-Bernoulli measures}\label{AM}

\begin{definition}\label{AM1} We say that a non-zero nonnegative function $\psi: \mathscr{C}_A\to \mathbb{R}_+$ is  almost-multiplicative if there exists $C>0$ such that for all $([w],[\widetilde w])\in \mathscr{C}_A^2$ we have
\begin{equation}\label{bernoulli}
C^{-1}\psi ([w])\psi([\widetilde w])\le \psi ([w]\cdot[w'])\le C \psi ([w])\psi([\widetilde w]).
\end{equation}
The support of $\psi$ is defined by 
$$
\mbox{supp}(\psi)=\left\{z\in\mathbb{A}: \forall \ n\ge 0, \psi ([z|n])>0\right \}.$$
By construction, if $[A]$ stands for the set $\{[a]: a\in A,\ \psi([a])>0\}$, then 
$$
\supp(\psi)=\bigcap_{n\ge 0}\sigma^{-n}([A]).
$$

Due to (\ref{bernoulli}) the sequence $\big(s_n=\sum_{w\in A^n}\psi([w])\big )_{n\ge 1}$ is submultiplicative in the sense that $C^{-1}s_ns_p\le s_{n+p}\le  C s_n s_p$ for all $n,p\ge 1$. Consequently, the topological pressure $P(\log \psi)$ of the function $\log \psi$ is well defined by 
\begin{equation}\label{entr}
P(\log \psi)=\lim_{n\to\infty}\frac{1}{n}\log \sum_{w\in A^n}\psi([w])
\end{equation}
and for all $n\ge 1$ we have 
\begin{equation}\label{ent}
C^{-1}\exp (nP(\log \psi))\le\sum_{w\in A^n}\psi([w])\le C\exp (nP(\log \psi)).
\end{equation}

We denote by $AM(\mathscr{C}_A)$ the set of almost-multiplicative functions on $\mathscr{C}_A$.
\end{definition}
For the sake of completeness, we sketch the proofs of (\ref{entr}) and (\ref{ent}).
\begin{proof}
For $n\ge 1$ set $u_n=\log (s_n)+\log (C)$ and $v_n=-\log(s_n)+\log (C)$. The sequence $(u_n)_{n\ge 1}$ is subadditive so $u_n/n$ converges  as $n\to\infty$ to a limit that we denote $P(\log(\psi))$. Also, $P(\log(\psi))\le u_n/n$ for all $n\ge 1$. Similarly, $-P(\log(\psi))\le v_n/n$ for all $n\ge 1$. These two inequalities yield (\ref{ent}).
\end{proof}

If $\psi\in AM(\mathscr{C}_A)$ is the restriction to $\mathscr{C}_A$ of a probability measure, the "almost multiplicativity" property is nothing but the quasi-Bernoulli property introduced in \cite{Michon,Michon2,BMP}. Examples of such functions are given in Section \ref{Examples}. 

Since for  $\psi\in AM(\mathscr{C}_A)$ our purpose will be to study the asymptotic behavior of $\log \psi([w])/|w|$, it will be possible to assume, without loss of generality, that $\psi$ is the restriction to $\mathscr{C}_A$ of a quasi-Bernoulli measure. Let us explain why it is so. It follows from (\ref{bernoulli}) and (\ref{ent}) that if we define 
$$
\widetilde \psi ([w])=\frac{\psi([w])}{\sum_{w'\in A^n}\psi([w'])},\ [w]\in \mathscr{C}_A,
$$
then $\widetilde\psi\in AM(\mathscr{C}_A)$. Consider  on $\mathbb{A}$ the sequence of probability measures 
$$
\mu_n=r^n\sum_{w\in A^n}\widetilde \psi ([w])\lambda_{|[w]},\quad n\ge 1,
$$
where $\lambda_{|[w]}$ stands for the restriction of $\lambda$ to $[w]$. Due to (\ref{bernoulli}) and (\ref{ent}), there exists $C_1>0$ such that for all $n\ge 1$, $p\ge n$ and $ w\in A^n$ 
$$
C_1^{-1}\widetilde \psi ([w])\le \mu_p([w])\le C_1\widetilde \psi([w]).
$$ Consequently, for any measure $\mu$ which is the limit of a subsequence of $(\mu_n)_{n\ge 1}$, for all $n\ge 1$ and $w\in A^n$ we have
$$
C_1^{-1}\widetilde \psi ([w])\le \mu([w])\le C_1\widetilde \psi([w]),
$$
and thus for some other constant $C_2>0$
\begin{equation}\label{Gibbs}
C_2^{-1}\exp(-nP(\log \psi)) \psi([w]) \le \mu([w])\le C_2\exp(-nP(\log \psi)) \psi([w]) .
\end{equation}
In particular, the restriction of $\mu$ to $\mathscr{C}_A$ belongs to $AM(\mathscr{C}_A)$ and $\supp(\mu)=\supp(\psi)$. This implies that there exists on $(\supp(\psi),\sigma)$ an unique ergodic measure $\mu_\psi$ strongly equivalent to the restriction of each of these measures $\mu$ to $\supp(\psi)$ (see \cite{Carleson,He}). This measure deserves to be called Gibbs measure since (\ref{Gibbs}) is an extension of the relation linking the Birkhoff sums and the Gibbs measure associated with a potential satisfying the Dini condition (\ref{Dini}) in the thermodynamic formalism (see \cite{Bowen} and Section~\ref{ex1}). 

We also denote by $\mu_\psi$ the extension of $\mu_\psi$ to $\mathbb{A}$ defined by $\mu_\psi(E)=\mu_\psi(E\cap \supp(\psi))$ for any Borel subset $E$ of $\mathbb{A}$.

\subsection{\bf "Self-affine" symbolic spaces}\label{self-affine}

Let  $2\le r_1\le r_2$  be two integers. For $i\in\{1,2\}$ let $A_i$ denote the set $\{0,\dots,r_i-1\}$.  The symbolic spaces built from $A_1$ and $A_2$ like $\mathbb{A}$ is from $A$ above are respectively denoted by $\mathbb{A}_1$ and $\mathbb{A}_2$. The symbolic space built from $A_1\times A_2$ is equal to $\mathbb{A}_1\times \mathbb{A}_2$. The shift operations over the symbolic spaces $\mathbb{A}_1$, $\mathbb{A}_2$ and $\mathbb{A}_1\times\mathbb{A}_2$ are respectively denoted by $\sigma_1$, $\sigma_2$ and $\sigma$ (we have $\sigma=(\sigma_1,\sigma_2)$). 

\smallskip

Each set $\mathbb{A}_i$, $i\in\{1,2\}$ is endowed with the ultrametric  distance $d_i:(z,z')\in \mathbb{A}_i^2\mapsto r_i^{-|z\land z'|}$. Then, the product symbolic space $\mathbb{A}_1\times \mathbb{A}_2$ is endowed with the ultrametric distance $d\big ((x,y),(x',y')\big )=\max \big (d_1(x,x'),d_2(y,y')\big )$ and called self-affine because the natural projections of its cylinders in the euclidean plane are the rectangular cells of the self-affine grid $\big\{[kr_1^{-n},(k+1)r_1^{-n}]\times [lr_2^{-n},(l+1)r)2^{-n}]\big \}_{\substack{n\ge 1\\ 0\le k<r_1^n,\, 0\le l<r_2^n}}$.

For every $n\ge 1$, let $\mathcal {F}_n$ be the set of balls of radius $r_2^{-n}$ in $(\mathbb{A}_1\times\mathbb{A}_2,d)$. Let $g(n)$ be the smallest integer $m$ such that $r_1^{-m}\le r_2^{-n}$. We have  
\begin{equation}\label{Fn}
\mathcal {F}_n=\left \{[w_1\cdot\widetilde w_1]\times [w_2]: (w_1,\widetilde w_1,w_2)\in A_1^n\times A_1^{g(n)-n}\times A_2^n\right \}.
\end{equation}

If $z\in \mathbb{A}_{1}\times\mathbb{A}_{2}$ and $n\ge 0$, let $B_n(z)$ stand for the unique element of $\mathcal{F}_n$ containing $z$. We have $B_n(z)=B(z,r_2^{-n})$.

The diameter of a subset $E$ of $ (\mathbb{A}_{1}\times\mathbb{A}_{2},d)$ is denoted by $|E|$.

For $i\in\{1,2\}$, let $\pi_i$ stand for the projection from $\mathbb{A}_1\times\mathbb{A}_2$ to $\mathbb{A}_i$.

\subsection{\bf Some properties of almost multiplicative functions} \label{properties}
From now on, we adopt the convention $0^\gamma=0$ for all $\gamma\in\mathbb{R}$.  

\smallskip

If $\psi\in AM(\mathscr{C}_{A_1\times A_2})$ and $q\in\mathbb{R}$, we define 

$$
I_{\psi,q}: [w_1]\in \mathscr{C}_{A_1}\mapsto \sum_{w_2\in A_2^{|w_1|}} \psi([w_1]\times [w_2])^q.
$$

\begin{proposition}\label{AMcont1}
Let $(\psi, \widetilde\psi)\in AM(\mathscr{C}_{A_1\times A_2})^2$. 

\begin{enumerate} 

\item For every $(q,\gamma)\in\mathbb{R}^2$, the function $I_{\psi,q}^\gamma$ belongs to $AM(\mathscr{C}_{A_1})$ and its support is $\pi_1(\supp(\psi))$. 

\item If $(\gamma,\widetilde\gamma)\in\mathbb{R}^2$ and $q\in\mathbb{R}$ the function $I_{\psi,1}^{q\gamma}\cdot I_{\widetilde \psi, q}^{\widetilde \gamma}$ belongs to $AM(\mathscr{C}_{A_1})$. Also, the mapping $
q\in\mathbb{R}\mapsto P\big (I_{\psi,1}^{q\gamma }\cdot I_{\widetilde \psi, q}^{\widetilde \gamma}\big )=P\big (q\gamma \log (I_{\psi, 1})+\widetilde \gamma \log (I_{\widetilde\psi,q})\big )$
is convex. 
\end{enumerate}
\end{proposition}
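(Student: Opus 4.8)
The plan is to treat the two assertions of part (1) --- almost-multiplicativity and the determination of the support --- separately, and then to deduce part (2) from part (1).

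For the almost-multiplicativity in part (1), the key is that the double sum defining $I_{\psi,q}$ factorizes under concatenation. Fix $[w_1],[\widetilde w_1]\in\mathscr{C}_{A_1}$ with $|w_1|=n$ and $|\widetilde w_1|=m$. Every $w_2\in A_2^{n+m}$ splits uniquely as $w_2=w_2'\widetilde w_2'$ with $w_2'\in A_2^n$, $\widetilde w_2'\in A_2^m$, and in $\mathscr{C}_{A_1\times A_2}$ one has the identity $[w_1\widetilde w_1]\times[w_2'\widetilde w_2']=\big([w_1]\times[w_2']\big)\cdot\big([\widetilde w_1]\times[\widetilde w_2']\big)$. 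Applying (\ref{bernoulli}) to $\psi$, raising to the power $q$, and using the convention $0^\gamma=0$ to cover vanishing cylinders, I get, with a uniform constant $C^{|q|}$,
$$
\psi\big([w_1\widetilde w_1]\times[w_2'\widetilde w_2']\big)^q\approx \psi\big([w_1]\times[w_2']\big)^q\,\psi\big([\widetilde w_1]\times[\widetilde w_2']\big)^q.
$$
Summing over $w_2'$ and $\widetilde w_2'$ independently, the right-hand side factorizes into $I_{\psi,q}([w_1])\,I_{\psi,q}([\widetilde w_1])$, so $I_{\psi,q}\in AM(\mathscr{C}_{A_1})$ with constant $C^{|q|}$; raising the resulting two-sided inequality to the power $\gamma$ (again invoking $0^\gamma=0$) then shows $I_{\psi,q}^\gamma\in AM(\mathscr{C}_{A_1})$ with constant $C^{|q\gamma|}$.

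For the support, I first note that $I_{\psi,q}^\gamma([w_1])>0$ if and only if there exists $w_2\in A_2^{|w_1|}$ with $\psi([w_1]\times[w_2])>0$, independently of $q$ and $\gamma$. The inclusion $\pi_1(\supp(\psi))\subseteq\supp(I_{\psi,q}^\gamma)$ is then immediate. For the reverse inclusion I fix $x\in\supp(I_{\psi,q}^\gamma)$, so for every $n$ there is $w_2\in A_2^n$ with $\psi([x|n]\times[w_2])>0$. The upper bound in (\ref{bernoulli}) makes positivity of $\psi$ hereditary under prefixes (if $\psi([u]\cdot[v])>0$ then $\psi([u])>0$), so the sets $F_n=\{y\in\mathbb{A}_2:\psi([x|n]\times[y|n])>0\}$ are nonempty, closed, and nested decreasing. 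Compactness of $\mathbb{A}_2$ (equivalently König's lemma applied to the finitely branching tree of admissible prefixes) yields $y\in\bigcap_n F_n$, whence $(x,y)\in\supp(\psi)$ and $x\in\pi_1(\supp(\psi))$. I expect this compactness/tree step to be the one genuinely non-algebraic point of part (1).

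Part (2) splits similarly. By part (1) both $I_{\psi,1}^{q\gamma}$ and $I_{\widetilde\psi,q}^{\widetilde\gamma}$ are almost-multiplicative, and a product of almost-multiplicative functions is almost-multiplicative (multiply the defining inequalities of the two factors), giving the first assertion; the displayed identity between the two pressures is just the definition of $P$ applied to the logarithm of a product. For the convexity I would argue by log-convexity. Writing $Z_n(q)=\sum_{w_1\in A_1^n}I_{\psi,1}([w_1])^{q\gamma}\,I_{\widetilde\psi,q}([w_1])^{\widetilde\gamma}$, it suffices to show each summand is log-convex in $q$ and then invoke the stability of log-convexity under finite sums (Hölder's inequality) and under the pointwise limit $q\mapsto\lim_n\frac1n\log Z_n(q)$. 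For fixed $[w_1]$ the logarithm of the summand is $q\gamma\log I_{\psi,1}([w_1])+\widetilde\gamma\log I_{\widetilde\psi,q}([w_1])$: the first term is affine in $q$, and $q\mapsto\log I_{\widetilde\psi,q}([w_1])=\log\sum_{w_2}\exp\big(q\log\widetilde\psi([w_1]\times[w_2])\big)$ is convex, being the logarithm of a sum of exponentials. The delicate point --- the one I would treat most carefully --- is the exponent $\widetilde\gamma$: multiplication by $\widetilde\gamma$ preserves convexity precisely when $\widetilde\gamma\ge0$, which is the range of exponents (ratios of logarithms of the contraction ratios $r_1,r_2$) relevant to the self-affine setting. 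Once log-convexity of each summand is in hand, the passage through the finite sum and the thermodynamic limit is routine.
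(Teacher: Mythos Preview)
Your proof is correct and follows essentially the same route as the paper's. The only differences are cosmetic: for the support claim the paper builds $y$ one letter at a time, using the \emph{lower} bound in (\ref{bernoulli}) to propagate positivity forward from $[x|n]\times[y_1\cdots y_n]$ to $[x|n+1]\times[y_1\cdots y_{n+1}]$, rather than invoking compactness as you do; and for the convexity in part~(2) the paper runs exactly your log-convexity-of-summands argument but asserts without qualification that $q\mapsto I_{\widetilde\psi,q}([w_1])^{\widetilde\gamma}$ is log-convex. Your caution about the sign of $\widetilde\gamma$ is well placed---multiplying a convex function by a negative scalar does not preserve convexity---and the paper's proof as written really only covers $\widetilde\gamma\ge 0$; as you observe, the applications in the paper all take $\widetilde\gamma=s\in(0,1]$, so nothing downstream is affected.
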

\begin{proof}
(1) It is enough to prove the result for $\gamma=1$. Indeed, it is immediate that if $\rho\in AM(\mathscr{C}_{A_1})$ and $\gamma\in\mathbb{R}$ then $\rho^\gamma\in AM(\mathscr{C}_{A_1})$ and $\supp(\rho^\gamma)=\supp(\rho)$. Let $q\in\mathbb{R}$. Due to (\ref{bernoulli}), there exists $C>0$ depending on $\psi$ and $q$ such that for all $n,p\ge 1$ and $(w_1,\widetilde w_1)\in A_1^n\times A_1^p$ we have 
\begin{eqnarray*}
I_{\psi,q}([w_1\cdot\widetilde w_1])&=&\sum_{(w_2,\widetilde w_2)\in A_2^n\times A_2^p} \psi([w_1\cdot\widetilde w_1]\times [w_2\cdot\widetilde w_2]^q\\
&\approx& \sum_{(w_2,\widetilde w_2)\in A_2^n\times A_2^p} \psi([w_1]\times[w_2])^q \psi ([\widetilde w_1]\times [\widetilde w_2]^q \\
&\approx&  \sum_{w_2\in A_2^n} \psi([w_1]\times[w_2])^q\sum_{\widetilde w_2\in\times A_2^p} \psi ([\widetilde w_1]\times [\widetilde w_2]^q\\
&=&I_{\psi,q}([w_1])I_{\psi,q}([\widetilde w_1]).
\end{eqnarray*}

Let $x\in \supp(I_{\psi,q})$. By definition, $I_{\psi,q}([x|n])>0$ for all $n\ge 1$. In particular, $I_{\psi,q}([x|1])>0$ and we can chose $y_1\in A_2$ such that $\psi(x|1,y_1)>0$. Suppose that for $n\ge 1$ we have built $(y_1,\dots y_n)\in A_2^n$ such that $\psi (x|n,y_1\cdots y_n)>0$. Since $I_{\psi,q}\in AM(\mathscr{C}_{A_1})$, we have $I_{\psi,q}([x_{n+1}])>0$ so there exists $y_{n+1}\in A_2$ such that $\psi ([x_{n+1}]\times [y_{n+1}])>0$ and thus $\psi([x|n+1,y_1\cdots y_{n+1})])>0$. Consequently, we can construct by induction an infinite word $y\in \mathbb{A}_2$ such that $(x,y)\in K$, hence $\supp(I_{\psi,q})\subset \pi_i(K)$. The opposite inclusion is immediate. 

\smallskip

\noindent
(2) Let $(\gamma,\widetilde\gamma)\in\mathbb{R}^2$. It follows from (1) that for any $q\in\mathbb{R}$, $I_{\psi}^{q\gamma}\cdot I_{\widetilde \psi, q}^{\widetilde \gamma}$ belongs to $AM(\mathscr{C}_{A_1})$ as a product of two elements of $AM(\mathscr{C}_{A_1})$.

Fix $[w_1]\in \mathscr{C}_{A_1}$. On the one hand, the mapping $q\mapsto I_{\widetilde \psi,q}([w_1])$ is log-convex as the sum of the log-convex functions $q\mapsto \widetilde \psi([w_1]\times[w_2])^q$, $w_2\in A_2^{|w_1|}$, so $q\mapsto I_{\widetilde \psi,q}([w_1])^{\widetilde \gamma}$ is log-convex as well. On the other hand, the mapping $q\mapsto I_{\psi,1}([w_1])^{q \gamma}$ is clearly log-convex. Consequently, $q\mapsto I_{\psi,1}([w_1])^{q \gamma}\cdot I_{\widetilde \psi,q}([w_1])^{\widetilde \gamma}$ is log-convex. It follows from these remarks and the definition of the pressure of an element of $AM(\mathscr{C}_{A_1})$ (see (\ref{entr})) that $q\in\mathbb{R}\mapsto P\big (I_{\psi,1}^{\gamma q}\cdot I_{\widetilde \psi, q}^{\widetilde \gamma}\big )$ is the limit of a sequence of convex functions. 
\end{proof}

\begin{proposition}\label{AMcont}
Let $\psi\in AM(\mathscr{C}_{A_1\times A_2})$. Suppose that $\psi$ is the restriction to $\mathscr{C}_{A_1\times A_2}$  of a Gibbs measure $\mu_\psi$. Then 
\begin{enumerate}
\item $\mu_\psi\circ \pi_1^{-1}\in AM(\mathscr{C}_{A_1})$ and $\supp(\mu_\psi\circ\pi_1^{-1})=\pi_1(\supp(\psi))$.

\item  There exists $C>0$ such that 
$$
\mu_\psi \big ([w_1\cdot\widetilde w_1]\times [w_2]\big )\approx \psi \big
([w_1]\times [w_2]\big )\mu_\psi\circ \pi_1^{-1}([\widetilde w_1]),
$$
for all $n\ge 1$ and ball $B=[w_1\cdot\widetilde w_1]\times [w_2]\in \mathcal{F}_n$.
\end{enumerate}
\end{proposition}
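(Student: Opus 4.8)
The plan is to reduce everything to the function $I_{\psi,1}$ already studied in Proposition~\ref{AMcont1}, by first identifying the projected measure $\mu_\psi\circ\pi_1^{-1}$ with $I_{\psi,1}$ on cylinders. Fix $w_1\in A_1^n$. Since $\pi_1^{-1}([w_1])=[w_1]\times\mathbb{A}_2$ and the cylinders $\{[w_2]:w_2\in A_2^n\}$ form a partition of $\mathbb{A}_2$, one has the disjoint decomposition $[w_1]\times\mathbb{A}_2=\bigsqcup_{w_2\in A_2^n}[w_1]\times[w_2]$. Using that $\psi$ is the restriction of $\mu_\psi$ to $\mathscr{C}_{A_1\times A_2}$, this gives
$$
\mu_\psi\circ\pi_1^{-1}([w_1])=\sum_{w_2\in A_2^n}\mu_\psi([w_1]\times[w_2])=\sum_{w_2\in A_2^n}\psi([w_1]\times[w_2])=I_{\psi,1}([w_1]).
$$
Part~(1) then follows at once from Proposition~\ref{AMcont1}(1) applied with $q=\gamma=1$: the restriction of $\mu_\psi\circ\pi_1^{-1}$ to $\mathscr{C}_{A_1}$ equals $I_{\psi,1}\in AM(\mathscr{C}_{A_1})$, and its support is $\pi_1(\supp(\psi))$.

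For part~(2), note first that, again because $\psi$ is the restriction of $\mu_\psi$, one has $\mu_\psi(B)=\psi$-values on genuine cylinders only, and $B=[w_1\cdot\widetilde w_1]\times[w_2]$ with $w_1\in A_1^n$, $\widetilde w_1\in A_1^{g(n)-n}$, $w_2\in A_2^n$ is \emph{not} itself a cylinder of $\mathscr{C}_{A_1\times A_2}$, since its two sides have different lengths $g(n)$ and $n$. I would therefore split $[w_2]=\bigsqcup_{u_2\in A_2^{g(n)-n}}[w_2\cdot u_2]$ to write
$$
\mu_\psi(B)=\sum_{u_2\in A_2^{g(n)-n}}\psi\big([w_1\cdot\widetilde w_1]\times[w_2\cdot u_2]\big).
$$
Each summand is now a genuine cylinder of $\mathscr{C}_{A_1\times A_2}$, equal to the concatenation $([w_1]\times[w_2])\cdot([\widetilde w_1]\times[u_2])$, so the almost-multiplicativity (\ref{bernoulli}) of $\psi$ yields
$$
\psi\big([w_1\cdot\widetilde w_1]\times[w_2\cdot u_2]\big)\approx\psi([w_1]\times[w_2])\,\psi([\widetilde w_1]\times[u_2])
$$
with a constant independent of $n$ and of the words involved. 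Summing over $u_2\in A_2^{g(n)-n}$ factors out $\psi([w_1]\times[w_2])$ and leaves $\sum_{u_2}\psi([\widetilde w_1]\times[u_2])=I_{\psi,1}([\widetilde w_1])$, which by the first step equals $\mu_\psi\circ\pi_1^{-1}([\widetilde w_1])$; this is exactly the desired estimate.

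The only genuinely delicate point is the length mismatch built into the balls of $\mathcal{F}_n$: the $A_1$-side of $B$ has length $g(n)$ while its $A_2$-side has length $n$, so $\psi$ cannot be applied directly to $B$, and the bound (\ref{bernoulli}) becomes available only after restoring matching lengths by refining $[w_2]$ into cylinders of depth $g(n)$. Once this refinement is in place, the estimate reduces to a single application of almost-multiplicativity followed by summation, and the uniformity of $C$ is inherited directly from the uniform quasi-Bernoulli constant of $\psi$ (summing $C^{-1}a_i\le b_i\le Ca_i$ preserves $C$). No further ergodic or thermodynamic input is needed, the whole statement being an algebraic consequence of Proposition~\ref{AMcont1} and the product structure of the cylinders.
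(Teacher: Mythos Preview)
Your proof is correct and follows essentially the same route as the paper's: both parts reduce to identifying the restriction of $\mu_\psi\circ\pi_1^{-1}$ to $\mathscr{C}_{A_1}$ with $I_{\psi,1}$, and part~(2) is handled by refining $[w_2]$ into cylinders of depth $g(n)$, applying the quasi-Bernoulli bound~(\ref{bernoulli}), and summing. Your write-up is in fact slightly more careful than the paper's, which sums over $\widetilde w_2\in A_2^n$ where it should read $A_2^{g(n)-n}$; your choice $u_2\in A_2^{g(n)-n}$ is the correct one.
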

\begin{proof}
(1) It is enough to notice that the restriction of $\mu_\psi\circ\pi_1^{-1}$ to $AM(\mathscr{C}_{A_1})$ is equal to the function $I_{\psi,1}$. 

\smallskip

\noindent (2) For $n\ge 1$ and $B=[w_1\cdot\widetilde w_1]\times [w_2]\in \mathcal{F}_n$ we have 
\begin{eqnarray*}
\mu_\psi \big ([w_1\cdot\widetilde w_1]\times [w_2]\big )&=&
\sum_{\widetilde w_2\in A_2^n}\mu_\psi \big ([w_1\cdot\widetilde w_1]\times [w_2\cdot\widetilde w_2]\big )\\
&\approx &  \sum_{\widetilde w_2\in A_2^n}\psi([w_1]\times[w_2])\psi([\widetilde w_1]\times[\widetilde w_2])\\
&=& \psi([w_1]\times[w_2])\sum_{\widetilde w_2\in A_2^n}\psi([\widetilde w_1]\times[\widetilde w_2])\\
&=&\psi([w_1]\times[w_2])\mu_\psi\circ \pi_1^{-1}([\widetilde w_1]).
\end{eqnarray*}
\end{proof}

\subsection{{\bf Examples}}\label{Examples}
Let us fix a subset $A\subset A_1\times A_2$ such that $\# A\ge 2$ (to avoid trivial cases). Then let $[A]=\{[a_1]\times[a_2]: (a_1,a_2)\in A\}$ and consider the invariant compact subset of $\mathbb{A}_1\times\mathbb{A}_2$ defined by
$$
K=\bigcap_{n\ge 0}\sigma^{-n}([A]).
$$
The set $K$ will be the support of the functions $\psi$ that we are going to consider. The first example is a special case of the second one, which is itself a particular case of the third one.  

\subsubsection{{\bf $\psi$ as the exponential of Birkhoff sums.}\label{ex1}}

Let $\varphi: K\to \mathbb{R}$ be a continuous function satisfying the Dini condition 
\begin{equation}\label{Dini}
\int_{[0,1]}\sup_{\substack{z,z'\in K\\ d(z,z')\le r}}|\varphi(z)-\varphi(z')|\ \frac{dr}{r}<\infty
\end{equation}
(this holds for instance if $\varphi$ is H\"older continuous). Equivalently, 
\begin{equation}\label{bd}
Var(\varphi):=\sup_{n\ge 0}\sup_{(w_1,w_2)\in A_1^n\times A_2^n}\sup_{z,z'\in K\cap [w_1]\times [w_2]} |S_n\varphi(z')-S_n\varphi(z)| <\infty,
\end{equation}
with the convention, adopted also in the sequel, that the supremum over the empty set is equal to 0. Then if for $n\ge 1$ and $(w_1,w_2)\in A_1^n\times A_2^n$ we define
$$
\psi([w_1]\times [w_2])=\sup_{z\in K\cap [w_1]\times [w_2]}\exp \big (S_n\varphi (z)\big ),
$$
property (\ref{bd}) implies that $\psi\in AM(\mathscr{C}_{A_1\times A_2})$. This is the well known principle of bounded distortions.
\subsubsection{{\bf $\psi$ as the norm of Birkhoff products of matrices.}}\label{ex2}
Fix an integer $d\ge 1$ and for $1\le i,j \le d$ let $\varphi_{i,j}: K\to \mathbb{R}$ satisfying the Dini condition (\ref{Dini}). Then let $M$ be the mapping from $K$ to $\mathcal{M}_d(\mathbb{R}_+^*)$ , the set of square matrices of order $d$ whose entries are positive, defined by $M_{i,j}=\exp (\varphi_{i,j})$ for $1\le i,j\le d$. Finally, for $n\ge 1$ and $(w_1,w_2)\in A_1^n\times A_2^n$ let 
$$
\psi([w_1]\times [w_2])=\sup_{z\in K\cap [w_1]\times [w_2]}\|M\big (\sigma^{n-1}z\big )\cdots M(z)\|.
$$
Here $\|\cdot\|$ denotes the matrix norm defined by $\|B\|:=\mathbf{1}^\tau B\mathbf{1}$, where $\mathbf{1}$ is the $d$-dimensional column vector each coordinate of which is 1. 

Let us check that $\psi\in AM(\mathscr{C}_{A_1\times A_2})$. For $z\in K$ we denote $M\big (\sigma^{n-1}z\big )\cdots M(z)$ by $\xi_nM(z)$. The submultiplicativity property of $\|\cdot\|$ implies $\|\xi_{n+p}M(z)\|\le\|\xi_nM(z)\|\cdot\|\xi_pM(\sigma^n z)\|$ for all $n,p\ge 1$. This yields $\psi([w_1\cdot\widetilde w_1]\times [w_2\cdot\widetilde w_2])\le \psi([w_1]\times[w_2])\psi([\widetilde w_1]\times[\widetilde w_2])$ for all pairs of cylinders $([w_1]\times[w_2],[\widetilde w_1]\times[\widetilde w_2])$ in $\mathscr{C}_{A_1\times A_2}^2$. 

We now deal with the opposite inequality. Since $M$ is continuous, Lemma 2.1 in  \cite{FNonlinearity} yields $C>0$ such that for all $n,p\ge 1$ and $z\in K$,
\begin{equation}\label{fg}
\|\xi_{n+p}M(z)\|\ge C \|\xi_nM(z)\|\cdot \|\xi_pM(\sigma^n z)\|.
\end{equation}
Now fix $z\in K$, $n\ge 1$ and $z'\in K\cap [z|n]$. We have 
$$
\|\xi_nM(z')\|=\sum_{\substack{1\le i\le d\\1\le j\le d}}\sum_{\substack{1\le k_0,\dots, k_{n}\le d\\ k_0=i,\ k_n=j}}\prod_{l=0}^{n-1}M_{k_l,k_{l+1}}(\sigma^lz').
$$
Due to the Dini property satisfied by the entries of $M$, by using (\ref{bd}) we get
$$
\displaystyle \prod_{l=0}^{n-1}M_{k_l,k_{l+1}}(\sigma^lz')\ge \widetilde C^{-1} \prod_{l=0}^{n-1}M_{k_l,k_{l+1}}(\sigma^lz),
$$
where $\displaystyle \widetilde C=\prod_{\substack{1\le i\le d\\1\le j\le d}}\exp (Var(\varphi_{i,j}))$. Consequently, due to (\ref{fg}), for all $n,p\ge 1$ and $z\in K$ we have 
$$
\|\xi_{n+p}M(z)\|\ge C\widetilde C^{-2} \sup_{z'\in K\cap [z|n]} \|\xi_nM(z)\|\, \sup_{z'\in K\cap [\sigma^n z|p]}  \|\xi_pM(\sigma^n z')\|.
$$
This yields $\psi([w_1\cdot\widetilde w_1]\times [w_2\cdot\widetilde w_2])\ge C\widetilde C^{-2}  \psi([w_1]\times[w_2])\psi([\widetilde w_1]\times[\widetilde w_2])$ for all pairs of cylinders $([w_1]\times[w_2],[\widetilde w_1]\times[\widetilde w_2])\in \mathscr{C}_{A_1\times A_2}^2$.

\subsubsection{{\bf $\psi$ as a kind of skew product.}}\label{ex3}

Suppose that $\rho\in AM(\mathscr{C}_{A_1\times A_2})$ with $\supp(\psi)=K$ as obtained in previous Section~\ref{ex2} and let $\theta_1\in AM(\mathscr{C}_{A_1})$ such that $\supp(\theta_1)=\pi_1(K)$.
Then for $n\ge 1$ and $(w_1,w_2)\in A_1^n\times A_2^n$ let 
$$
\psi ([w_1]\times [w_2])=\theta_1([w_1])\frac{\rho([w_1]\times[w_2])}{I_{\rho,1}([w_1])}.
$$
We have $\psi\in AM(\mathscr{C}_{A_1\times A_2})$ because $\theta_1$, $\rho$ and $I_{\rho,1}$ are almost multiplicative. If we take for $\rho$ a function constructed like in Section~\ref{ex1} and for $\theta_1$ the restriction to $\mathscr{C}_{A_1}$ of an ergodic quasi-Bernoulli measure whose support is equal to $\pi_1(K)$, the measure $\mu_\psi$ constructed from $\psi$ in Section~\ref{AM} is a Gibbs measure like those considered in \cite{barmen} (the interpretation of these measures in terms of the thermodynamic formalism for random transformations \cite{BoGu,KK} is provided in \cite{barmen}). The model proposed in this section is more general.

\section{Main results}
\subsection{{\bf Multifractal analysis on self-affine symbolic spaces}\label{results}}

We fix an element $\psi$ of $AM(\mathscr{C}_{A_1\times A_2})$ and we assume without loss of generality that $P(\log(\psi))=0$ and $\psi$ is the restriction to $\mathscr{C}_{A_1\times A_2}$ of a Gibbs measure (see Section~\ref{AM}). We are interested in finding the Hausdorff spectra associated with the level sets 
$$
E_\psi(\alpha)=\left\{(x,y)\in \mbox{supp}(\psi): \lim_{n\to\infty}\frac{\log_{r_2} \psi([x|n]\times [y|n])}{-n}=\alpha\right \},\quad  (\alpha \ge 0)
$$
and 
\begin{eqnarray*}
E_{\mu_\psi}(\alpha)&=&\left\{(x,y)\in \mbox{supp}(\psi): \lim_{n\to\infty}\frac{\log_{r_2} \mu_\psi\big (B_n(x,y)\big )}{-n}=\alpha\right \},\quad (\alpha\ge 0).
\end{eqnarray*}
These spectra are respectively defined by 
$$
d_\psi:\ \alpha\ge 0\mapsto \dim\, E_\psi(\alpha)\mbox{ and } d_{\mu_\psi}: \ \alpha\ge 0\mapsto  \dim\, E_{\mu_\psi}(\alpha).
$$

Let $\displaystyle s=\frac{\log r_1}{\log r_2}$ and for $f:\mathbb{R}\to\mathbb{R}\cup\{-\infty\}$, define the Legendre transform of $f$ by $f^*:\alpha\ge 0\mapsto \inf_{q\in\mathbb{R}}\alpha q-f(q)$.

\subsubsection{\bf Multifractal analysis of Birkhoff averages.}\label{Th2} For $q\in \mathbb{R}$ define
$$
\mathcal{T}_\psi(q)=-P\big (s\log( I_{\psi,q})\big )/\log (r_1).
$$
It follows from Proposition~\ref{AMcont1}(2) that the function $\mathcal{T}_\psi$ is concave. Also, due to Proposition-Definition~\ref{AM1}, there exists a constant $C$ (depending on $\psi$ and $q$ only) such that 
\begin{equation}\label{submul1}
\sum _{w_1\in A_1^n}I_{\psi,q}([w_1])^s\approx r_1^{-n \mathcal{T}_\psi(q)}, \ n\ge 1.
\end{equation}
\begin{theorem}\label{th2}
(i) The concave function $\mathcal{T}_\psi$ is differentiable and non decreasing. 

\smallskip

(ii) For every $\alpha\in\mathbb{R}_+$, $d_{\psi}(\alpha)=\mathcal{T}_\psi^*(\alpha)$ if $\mathcal{T}_\psi^*(\alpha)> 0$ and $E_{\psi}(\alpha)=\emptyset$ if $\mathcal{T}_\psi^*(\alpha)<0$. 
\end{theorem}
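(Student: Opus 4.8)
The plan is to reduce everything to a one–parameter family of auxiliary Gibbs measures and a single local–dimension computation. For each $q\in\mathbb R$ I would set $\Psi_q=I_{\psi,q}^{\,s-1}\cdot\psi^q$; by Proposition~\ref{AMcont1}(1) the factor $I_{\psi,q}^{\,s-1}$ belongs to $AM(\mathscr C_{A_1})$, so $\Psi_q\in AM(\mathscr C_{A_1\times A_2})$, and (\ref{submul1}) gives $\sum_{A_1^n\times A_2^n}\Psi_q=\sum_{w_1\in A_1^n}I_{\psi,q}([w_1])^s\approx r_1^{-n\mathcal T_\psi(q)}$, hence $P(\log\Psi_q)=-\mathcal T_\psi(q)\log r_1$. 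Let $\nu_q$ be the Gibbs measure attached to $\Psi_q$ by Section~\ref{AM}, so that $\nu_q([w_1]\times[w_2])\approx r_1^{\,n\mathcal T_\psi(q)}I_{\psi,q}([w_1])^{s-1}\psi([w_1]\times[w_2])^q$ on diagonal cells, and its first marginal is the equilibrium measure of $s\log I_{\psi,q}$ on $\mathbb A_1$. Summing this estimate over the $y$–fibre of length $g(n)-n$ and splitting cells at level $n$ via almost–multiplicativity of $\psi$ and of $I_{\psi,q}$, I expect to obtain for a ball $B_n=[w_1\widetilde w_1]\times[w_2]\in\mathcal F_n$
\[
\nu_q(B_n)\approx r_1^{\,g(n)\mathcal T_\psi(q)}\,I_{\psi,q}([w_1])^{s-1}\,I_{\psi,q}([\widetilde w_1])^{s}\,\psi([w_1]\times[w_2])^q .
\]

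Everything then flows from the induced local–dimension formula. Writing $\alpha_n=-\tfrac1n\log_{r_2}\psi([x|n]\times[y|n])$ and $b_m=-\tfrac1m\log_{r_2}I_{\psi,q}([x|m])$, and using $\log_{r_2}r_1=s$, $g(n)s=n+O(1)$ and $I_{\psi,q}([\widetilde w_1])\approx I_{\psi,q}([x|g(n)])/I_{\psi,q}([x|n])$, the display should yield
\[
-\tfrac1n\log_{r_2}\nu_q\big(B_n(x,y)\big)=q\,\alpha_n-\mathcal T_\psi(q)+\big(b_{g(n)}-b_n\big)+o(1).
\]
The crucial feature is that $b_{g(n)}$ and $b_n$ appear with opposite signs and, because $g(n)\sim n/s$, with matching coefficients, so the $x$–behaviour cancels. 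As the $b_m$ are uniformly bounded (almost–multiplicativity of $I_{\psi,q}$), a short iteration along $n\mapsto g(n)$ will give $\liminf_n(b_{g(n)}-b_n)\le0$ for every $x\in\pi_1(\supp(\psi))$. Hence on $E_\psi(\alpha)$, where $\alpha_n\to\alpha$, one gets $\underline d_{\nu_q}(x,y)\le q\alpha-\mathcal T_\psi(q)$ at every point, and the comparison lemma $\underline d_\mu\le t$ on a set $\Rightarrow$ dimension $\le t$ yields $\dim E_\psi(\alpha)\le q\alpha-\mathcal T_\psi(q)$ for all $q$, i.e. $\dim E_\psi(\alpha)\le\mathcal T_\psi^*(\alpha)$. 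The emptiness statement is then free: if $\mathcal T_\psi^*(\alpha)<0$, some $q$ makes the bound negative, forcing $\underline d_{\nu_q}<0$ on $E_\psi(\alpha)$, impossible for a probability measure, so $E_\psi(\alpha)=\emptyset$.

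For the matching lower bound when $\mathcal T_\psi^*(\alpha)>0$, I would invoke part~(i) to pick $q$ with $\mathcal T_\psi'(q)=\alpha$, and then use ergodicity of the quasi–Bernoulli measure $\nu_q$ and of its marginal to show that $\nu_q$–almost surely $\alpha_n\to\mathcal T_\psi'(q)=\alpha$ and $b_m$ converges, so that $b_{g(n)}-b_n\to0$. This makes $\nu_q$ carried by $E_\psi(\alpha)$ with $\underline d_{\nu_q}=\mathcal T_\psi^*(\alpha)$ a.e., and Billingsley's lemma gives $\dim E_\psi(\alpha)\ge\mathcal T_\psi^*(\alpha)$. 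For part~(i), concavity is Proposition~\ref{AMcont1}(2); monotonicity will follow from the fact that the attainable exponents $\mathcal T_\psi'(q)$ are nonnegative, since $\psi$ is comparable on cylinders to the probability measure $\mu_\psi$; and differentiability, the one genuinely analytic input, I would take from that of $\beta_\psi$ through Proposition~\ref{prop4}(2), equivalently from uniqueness of the equilibrium state along the real–analytic family $q\mapsto s\log I_{\psi,q}$.

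The hard part will not be the upper bound: the cancellation above is exactly what lets a single measure control all of $E_\psi(\alpha)$, whereas the usual difficulty in self–affine problems is that the local dimension of the natural measure varies with the uncontrolled $x$–direction. The real work lies in the lower bound — constructing $\nu_q$ with the two–sided cell estimates and establishing the almost–sure convergences $\alpha_n\to\mathcal T_\psi'(q)$ and $b_m\to\mathrm{const}$. Identifying the $\nu_q$–typical exponent with $\mathcal T_\psi'(q)$ (i.e. differentiating the pressure and interchanging with the $\nu_q$–average) is the most delicate step, and it is also what ties the lower bound to the differentiability asserted in part~(i).
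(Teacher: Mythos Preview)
Your proposal is correct and matches the paper's approach essentially line for line: your $\nu_q$ is the paper's $\widetilde\mu_q$ (Section~\ref{aux}), your ball estimate is Proposition~\ref{prop1}(2), your iteration-of-$g$ argument for $\liminf_n(b_{g(n)}-b_n)\le 0$ is Proposition~\ref{limsup} and Corollary~\ref{corupper}, your Kingman-based lower bound is Proposition~\ref{prop5}, and your route to differentiability via Proposition~\ref{prop4}(2) is exactly Corollary~\ref{coro1}. The one point the paper makes sharper than your sketch is the identification of the $\nu_q$-typical value of $\alpha_n$ with $\mathcal T_\psi'(q)$: rather than differentiating the pressure and exchanging with the $\nu_q$-average, the paper first obtains the monodimensionality of $\widetilde\mu_q$ with exponent $\beta_{\widetilde\psi_q}'(1)=q\mathcal T_\psi'(q)-\mathcal T_\psi(q)$ (Proposition~\ref{diff} applied to $\widetilde\psi_q$, together with Proposition~\ref{prop4}(2)) and then reads off $\lim_n\alpha_n=\mathcal T_\psi'(q)$ from the ball formula, which is why a separate argument is needed at $q=0$.
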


\begin{remark}\label{submul} If in Example \ref{ex1} we assume  that the restriction of $\varphi$ to each set $K\cap [a_1]\times[a_2]$, $(a_1,a_2)\in A$, takes a constant value $\varphi_{a_1,a_2}$ then
$$
\mathcal{T}_\psi(q)=-\log_{r_1}\sum_{a_1\in A_1}\Bigl[\sum_{a_2\in A_2}\exp(q\varphi_{a_1,a_2})\Bigr ]^s,
$$
with the convention $\exp(q\varphi_{a_1,a_2})=0$ if $(a_1,a_2)\not\in A$. In this case, Theorem~\ref{th2} is obtained by seeking, for each $\alpha\ge 0$, a Bernoulli measure of maximal Hausdorff dimension supported by $E_\psi(\alpha)$. Then $\mathcal{T}_\psi(q)$ appears as a Lagrange multiplier (the same approach was used in \cite{King} for the multifractal analysis of Bernoulli measures on Sierpinski carpets).  Our general result is an extension of this situation, and $d_\psi(\alpha)$ is the maximal Hausdorff dimension of an ergodic measure on $\supp(\psi)$ supported by $E_\psi(\alpha)$. Such a measure is introduced in Section~\ref{aux}.
\end{remark}

\subsubsection{\bf An extension of the result obtained in \cite{barmen} for $d_{\mu_\psi}$.}\label{Th1} The central property used in \cite{barmen} was that the restriction to $\mathscr{C}_{A_1\times A_2}$ of the Gibbs measures considered in that paper (see the discussion of Section~\ref{ex3}) satisfy  the same property as $\psi$, namely (\ref{bernoulli}). We obtain the following generalized form of Theorem 1.1 of \cite{barmen}.

\smallskip

For $q\in\mathbb{R}$ define 
$$
\beta_\psi(q)=-P\big (q(1-s)\log (I_{\psi,1})+s\log (I_{\psi,q})\big )/\log(r_1). 
$$
It follows from Proposition~\ref{AMcont1}(2) that the function $\beta_\psi$ is concave. Also, due to Proposition-Definition~\ref{AM1}, there exists a constant $C$ (depending on $\psi$ and $q$ only) such that 
\begin{equation}\label{submul2}
\sum_{w_1\in A_1^n}\ I_{\psi,1}([w_1])^{q(1-s)}I_{\psi,q}([w_1])^s\approx r_1^{-n\beta_\psi(q)}, \ n\ge 1.
\end{equation}

\begin{theorem}\label{th1}
(i) The concave function $\beta_\psi$ is differentiable and non decreasing. 

\smallskip

(ii)  For every $\alpha\in\mathbb{R}_+$,  $d_{\mu_\psi}(\alpha)=\beta_\psi^*(\alpha)$ if $\beta_\psi^*(\alpha)> 0$ and $E_{\mu_\psi}(\alpha)=\emptyset$ if $\beta_\psi^*(\alpha)<0$. 
\end{theorem}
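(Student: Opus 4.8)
The plan is to reduce the local exponent of $\mu_\psi$ to a two–scale quantity, then to obtain $d_{\mu_\psi}=\beta_\psi^*$ by the classical two–sided scheme (an auxiliary measure for the lower bound, a covering estimate for the upper bound and for emptiness), and finally to establish the regularity of $\beta_\psi$ in (i), which the lower bound will already have used. Write $\nu=\mu_\psi\circ\pi_1^{-1}$, which by Proposition~\ref{AMcont}(1) coincides on $\mathscr{C}_{A_1}$ with $I_{\psi,1}$. Proposition~\ref{AMcont}(2) gives, for every $(x,y)$ and every $n\ge 1$,
\[
\mu_\psi\big(B_n(x,y)\big)\approx \psi\big([x|n]\times[y|n]\big)\,\nu\big([x_{n+1}\cdots x_{g(n)}]\big),
\]
so that, since $g(n)/n\to 1/s$ and hence $(g(n)-n)/n\to(1-s)/s$,
\[
\frac{\log_{r_2}\mu_\psi(B_n(x,y))}{-n}=\frac{-\log_{r_2}\psi([x|n]\times[y|n])-\log_{r_2}\nu([x_{n+1}\cdots x_{g(n)}])}{n}+o(1).
\]
This displays the local exponent as the exponent of $\psi$ at the aligned scale $n$ plus that of the projected measure $\nu=I_{\psi,1}$ over the extra first–coordinate block of length $\approx n(1-s)/s$; this coupling is exactly what produces the two weights $I_{\psi,q}^{s}$ and $I_{\psi,1}^{q(1-s)}$ in (\ref{submul2}).

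For the lower bound I would, for each $q\in\mathbb{R}$, build an auxiliary ergodic measure $m_q$ on $\supp(\psi)$ as a symbolic McMullen measure: take on $\mathbb{A}_1$ the Gibbs measure $\nu_q$ associated by Section~\ref{AM} with the almost–multiplicative function $I_{\psi,1}^{q(1-s)}I_{\psi,q}^{s}$ of Proposition~\ref{AMcont1}(2), whose pressure is $-\beta_\psi(q)\log r_1$, and lift it to $\mathbb{A}_1\times\mathbb{A}_2$ by distributing the second coordinate according to the conditional weights $\psi(\cdot)^{q}/I_{\psi,q}$. Using the Gibbs property (\ref{Gibbs}) of $\nu_q$ together with the two–scale formula above, one checks that for $m_q$–almost every $(x,y)$
\[
\log m_q\big(B_n(x,y)\big)=q\,\log\mu_\psi\big(B_n(x,y)\big)+\beta_\psi(q)\,n\log r_2+o(n),
\]
the cross terms cancelling precisely because $x_{n+1}\cdots x_{g(n)}$ is itself a $\nu_q$–typical block. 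Dividing by $-n\log r_2$ shows that $m_q$–a.e.\ point has a common local exponent $\alpha(q)$ and that $\dim m_q=q\,\alpha(q)-\beta_\psi(q)$; comparison with the upper bound forces $\alpha(q)=\beta_\psi'(q)$ and $\dim m_q=\beta_\psi^*(\alpha(q))$. Hence $d_{\mu_\psi}(\beta_\psi'(q))\ge\beta_\psi^*(\beta_\psi'(q))$, and the differentiability of $\beta_\psi$ ensures $\beta_\psi'$ exhausts every $\alpha$ with $\beta_\psi^*(\alpha)>0$.

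For the upper bound and the emptiness statement I would fix $q$, cover $E_{\mu_\psi}(\alpha)$, and bound its Hausdorff measure by a Chebyshev/large–deviation estimate controlled by (\ref{submul2}). The delicate point, where the self–affine geometry really enters, is the choice of cover: covering naively by the balls $B_n$ only yields the coarser bound built from the two separate pressures $P(\log I_{\psi,q})$ and $P(q\log I_{\psi,1})$, whereas the sharp exponent $\beta_\psi(q)$ requires the two–scale approximate squares adapted to (\ref{submul2}), for which one sums $I_{\psi,1}([w_1])^{q(1-s)}I_{\psi,q}([w_1])^{s}$ over $w_1\in A_1^{n}$. This gives $d_{\mu_\psi}(\alpha)\le\inf_{q}\big(q\alpha-\beta_\psi(q)\big)=\beta_\psi^*(\alpha)$ for every $\alpha$, and when $\beta_\psi^*(\alpha)<0$ a first–moment/Borel--Cantelli argument forces $E_{\mu_\psi}(\alpha)=\emptyset$. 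I expect this sharp covering, together with the verification that $m_q$ genuinely realizes its dimension, to be the main obstacle; it is exactly the type of argument the authors say was left implicit in \cite{barmen}.

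Finally, for (i): concavity of $\beta_\psi$ is Proposition~\ref{AMcont1}(2), and monotonicity follows once $\beta_\psi'(q)=\alpha(q)\ge 0$, which holds because every exponent in the first display is nonnegative (as $\mu_\psi(B_n)\le 1$). The remaining and most delicate assertion is differentiability. Here I would show that the almost–multiplicative function $I_{\psi,1}^{q(1-s)}I_{\psi,q}^{s}$ on $(\pi_1(\supp(\psi)),\sigma_1)$ admits a unique equilibrium state $m_q$ and depends regularly on $q$ (it is real–analytic in $q$ on each cylinder), so that the general differentiability theory of the pressure applies and gives
\[
\beta_\psi'(q)=-\frac{1}{\log r_1}\int \partial_q\big(q(1-s)\log I_{\psi,1}+s\log I_{\psi,q}\big)\,dm_q .
\]
Extending that theory from genuine continuous potentials to the almost–multiplicative functions at hand is where the work lies, and it is this differentiability which, through the identity relating the defining potentials of $\beta_\psi$ and $\mathcal{T}_\psi$, also yields the differentiability of $\mathcal{T}_\psi$ in Theorem~\ref{th2}(i).
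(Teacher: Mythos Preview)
Your auxiliary measure $m_q$ is exactly the paper's $\mu_q=\mu_{\psi_q}$, and your lower-bound scheme (ergodicity of $m_q$, cancellation of the two-scale correction via Kingman, then mass distribution) is the one the paper carries out in Proposition~\ref{prop5}. So that half is fine.

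The genuine gap is the upper bound. A direct covering/large-deviation estimate on $\mu_\psi$ produces the $L^q$-spectrum $\tau_\mu$, not $\beta_\psi$: summing $\mu(B)^q$ over $B\in\mathcal{F}_n$ factors as $\big(\sum_{w_1}I_{\psi,q}([w_1])\big)\big(\sum_{\widetilde w_1}I_{\psi,1}([\widetilde w_1])^q\big)$, which is governed by two \emph{separate} pressures and not by the single pressure in~(\ref{submul2}). Your sentence about ``two-scale approximate squares adapted to~(\ref{submul2})'' names the difficulty but does not resolve it; there is no natural cover indexed by $w_1\in A_1^n$ alone. The paper's solution is to abandon the covering route and reuse the \emph{same} measure $\mu_q$ for the upper bound. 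The key is Proposition~\ref{prop1}(1),
\[
\frac{\mu_q(B_n(x,y))}{\mu(B_n(x,y))^q\,r_2^{\,n\beta_\psi(q)}}\approx \frac{u_n(x)}{u_{g(n)}(x)^{s}},\qquad u_n(x)=\frac{I_{\psi,q}([x|n])}{I_{\psi,1}([x|n])^{q}},
\]
together with the deterministic McMullen-type inequality (Proposition~\ref{limsup})
\[
\limsup_{n\to\infty}\Big(\frac{u_n(x)}{u_{g(n)}(x)^{s}}\Big)^{1/n}\ge 1\quad\text{for every }x\in\pi_1(\supp(\psi)),
\]
proved by writing the ratio as $\frac{u_n^{1/n}}{u_{g(n)}^{1/g(n)}}\cdot u_{g(n)}^{1/g(n)-s/n}$ and using $n/g(n)\to s<1$. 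This forces $\liminf_n\frac{\log_{r_2}\mu_q(B_n(z))}{-n}\le \alpha q-\beta_\psi(q)$ at \emph{every} $z\in E_{\mu_\psi}(\alpha)$, and then Lemma~\ref{falc} (mass distribution upper bound) gives $\dim E_{\mu_\psi}(\alpha)\le \alpha q-\beta_\psi(q)$ directly, with the emptiness when $\beta_\psi^*(\alpha)<0$ coming for free. No Chebyshev or Borel--Cantelli argument is needed, and the obstacle you anticipate disappears.

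Your route to differentiability is also heavier than necessary. Rather than extending equilibrium-state theory to almost-multiplicative potentials, the paper exploits the functional equation (Proposition~\ref{prop4}(1))
\[
\beta_{\psi_q}(r)=\beta_\psi(qr)-r\beta_\psi(q),
\]
so that differentiability of $\beta_\psi$ at any $q\neq 0$ follows from differentiability of $\beta_{\psi_q}$ at $r=1$; the latter is a general fact for quasi-Bernoulli measures (Proposition~\ref{diff}, via the $L^q$-spectrum theory of~\cite{Ngai,He}). The point $q=0$ is handled by the uniform estimate $|\beta_\psi(q)-\beta_{\psi,n}(q)|\le C|q|/n$, which pins the one-sided derivatives together. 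Note also the direction of the implication you state at the end is reversed: it is the differentiability of $\beta_{\widetilde\psi_q}$ at $1$ that gives that of $\mathcal{T}_\psi$, via $\beta_{\widetilde\psi_q}(r)=\mathcal{T}_\psi(qr)-r\mathcal{T}_\psi(q)$, not the other way around.
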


\begin{remark}
In \cite{barmen}, the function $\beta_\psi$ is not interpreted as a topological pressure. 
\end{remark}

\subsection{{\bf Geometric realizations on self-affine Sierpinski carpets}}\label{proj}

Let $\mathbb{T}=\mathbb{R}/\mathbb{Z}$. Let $A$ and $K$ be defined as in Section~\ref{Examples} and let $\widetilde K$ be the self-affine Sierpinski carpet obtained in the torus $\mathbb{T}^2$ as the attractor of the iterated function system 
$$
S_{a_1,a_2}:(x,y)\mapsto (a_1r_1^{-1}+r_1^{-1}x,a_2 r_2^{-1}+r_2^{-1}y), \ \ (a_1,a_2)\in A.$$
By construction $\widetilde K=\widetilde \pi (K)$.

Let $\widetilde A_1=\{a_1\in A_1: \ \exists \ a_2\in A_2,\ (a_1,a_2)\in A\}$, and for $a_1\in A_1$ let $A_1(a_1)=\{a_2\in A_2:\ (a_1,a_2)\in A\}$.

For $i\in\{1,2\}$ define the canonical projections from $\mathbb{A}_i$  to $\mathbb{T}$
$$
\widetilde \pi_i: z\in {\mathbb{A}}_i\mapsto \sum_{k\ge 1} z_kr_i^{-k},  \mbox{ and let } \widetilde \pi=(\widetilde\pi_1,\widetilde\pi_2).
$$

The torus $\mathbb{T}^2$ is endowed with the transformation $T:(x,y)\mapsto (r_1 x,r_2 y)$ and the canonical metric distance denoted $\delta$.

\subsubsection{{\bf Multifractal analysis of Birkhoff averages.}}\label{proj11}
Let $\widetilde \varphi:\widetilde K\subset \mathbb{T}^2 \to \mathbb{R}$ be a continuous function satisfying the same Dini condition as $\varphi$ in (\ref{Dini}) but with $d$  replaced by $\delta$ and $K$ replaced by $\widetilde K$.  The mapping $\varphi=\widetilde \varphi\circ \widetilde \pi$ satisfies (\ref{Dini}), and we associate $\psi$ with $\varphi$ as in Section~\ref{ex1}. By considering $\widetilde\varphi-P(\log (\psi))$ instead of $\widetilde \varphi$ if necessary, we can assume without loss of generality that the topological pressure of $\psi$ is equal to 0.
For $\beta\in\mathbb{R}$ let
$$
F_{\widetilde \varphi}(\beta)=\left \{z\in\widetilde K: \lim_{n\to\infty}\frac{S_n\widetilde\varphi (z)}{n}=\beta\right\}.
$$

\begin{theorem}\label{proj1}
For all $\beta\in\mathbb{R}$, $\dim\, F_{\widetilde \varphi}(\beta)= \mathcal{T}_\psi^*(-\beta/\log r_2)$ if $\mathcal{T}_\psi^*(-\beta/\log r_2)>0$ and $F_{\widetilde \varphi}(\beta)=\emptyset$ if $\mathcal{T}_\psi^*(-\beta/\log r_2 )<0$.
\end{theorem}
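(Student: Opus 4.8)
The plan is to deduce this statement from Theorem~\ref{th2} by transferring the multifractal information from the self-affine symbolic space $\mathbb{A}_1\times\mathbb{A}_2$ down to the Sierpinski carpet $\widetilde K$ via the projection $\widetilde\pi$. The essential point is that $\widetilde\varphi$ on $\widetilde K$ is related to $\psi$ through $\varphi=\widetilde\varphi\circ\widetilde\pi$ and the construction of Section~\ref{ex1}, so the level sets $F_{\widetilde\varphi}(\beta)$ in the carpet should correspond, under $\widetilde\pi$, to the level sets $E_\psi(\alpha)$ in the symbolic space for an appropriate value of $\alpha$.

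First I would identify the change of variable relating $\beta$ and $\alpha$. By definition of $\psi$ in Example~\ref{ex1}, the principle of bounded distortions (\ref{bd}) gives $\log\psi([x|n]\times[y|n])=S_n\varphi(z)+O(1)$ uniformly for $z\in K\cap([x|n]\times[y|n])$. Hence for $z=(x,y)\in\supp(\psi)$ one has
\begin{equation*}
\frac{\log_{r_2}\psi([x|n]\times[y|n])}{-n}=\frac{S_n\varphi(z)}{-n\log r_2}+o(1).
\end{equation*}
Therefore the condition $\lim_n S_n\widetilde\varphi(\widetilde\pi(z))/n=\beta$ defining membership in $\widetilde\pi^{-1}(F_{\widetilde\varphi}(\beta))$ translates, since $S_n\varphi(z)=S_n\widetilde\varphi(\widetilde\pi(z))$, into $\lim_n \log_{r_2}\psi([x|n]\times[y|n])/(-n)=-\beta/\log r_2$, i.e. membership in $E_\psi(-\beta/\log r_2)$. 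This is exactly the substitution $\alpha=-\beta/\log r_2$ appearing in the statement, so at the symbolic level $\widetilde\pi^{-1}(F_{\widetilde\varphi}(\beta))\cap\supp(\psi)=E_\psi(-\beta/\log r_2)$. The emptiness assertion then follows immediately from the corresponding assertion in Theorem~\ref{th2}(ii).

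The harder half is the dimension identity, for which I would show that $\widetilde\pi$ preserves Hausdorff dimension between $E_\psi(\alpha)$ and $F_{\widetilde\varphi}(\beta)$. The geometry is arranged precisely so that this holds: the metric $d$ on $\mathbb{A}_1\times\mathbb{A}_2$ was defined via $d_2(y,y')=r_2^{-|y\land y'|}$ so that balls project to ``almost squares'' and the ball $B_n(x,y)=B((x,y),r_2^{-n})$ projects under $\widetilde\pi$ to a set of diameter comparable to $r_2^{-n}$ in $(\mathbb{T}^2,\delta)$. The map $\widetilde\pi$ is Lipschitz, giving $\dim\widetilde\pi(E_\psi(\alpha))\le\dim E_\psi(\alpha)$ at once. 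For the reverse inequality one must control overlaps: $\widetilde\pi$ is not injective (boundary identifications and the carpet structure), but the standard argument is that the overlaps are bounded in multiplicity on the relevant sets, so that $\widetilde\pi$ restricted to $\supp(\psi)$ is bi-Lipschitz up to a countable exceptional set of zero dimension, whence $\dim F_{\widetilde\varphi}(\beta)=\dim E_\psi(-\beta/\log r_2)=\mathcal{T}_\psi^*(-\beta/\log r_2)$ by Theorem~\ref{th2}(ii). I expect the bounded-multiplicity of the projection to be the main technical obstacle, but since the coding of the carpet by $K$ is finite-to-one off a dimension-zero set, this is routine given the setup; indeed the paper itself remarks that Theorem~\ref{proj1} is a direct consequence of Theorem~\ref{th2}.
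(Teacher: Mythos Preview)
Your change of variables $\alpha=-\beta/\log r_2$, the identification $\widetilde\pi^{-1}\big(F_{\widetilde\varphi}(\beta)\big)\cap K=E_\psi(\alpha)$, and the upper bound $\dim F_{\widetilde\varphi}(\beta)\le\dim E_\psi(\alpha)$ via the Lipschitz property of $\widetilde\pi$ are all correct and in line with the paper.

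For the lower bound your sketch diverges from the paper, and the formulation you give is inaccurate. The claim that $\widetilde\pi$ restricted to $\supp(\psi)$ is ``bi-Lipschitz up to a countable exceptional set of zero dimension'' is false: the set of points in $K$ with non-unique coding is uncountable and not of dimension zero (for instance, all $(x,y)\in K$ with $x$ eventually $0$), and even off that set the inverse of $\widetilde\pi$ is not Lipschitz, since points lying in distinct first-generation cylinders can project arbitrarily close in $\widetilde K$ while remaining at symbolic distance $1$. What \emph{is} true, and what you allude to just before, is a bounded covering multiplicity: the preimage in $K$ of any Euclidean ball of radius $r_2^{-n}$ is contained in a fixed number of elements of $\mathcal F_n$, because the $\widetilde\pi$-images of the elements of $\mathcal F_n$ are almost-squares of comparable size tiling $\mathbb T^2$. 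That observation alone, applied to an arbitrary cover of $F_{\widetilde\varphi}(\beta)$, does yield $\dim E_\psi(\alpha)\le\dim F_{\widetilde\varphi}(\beta)$ and would close the argument; you should phrase it this way rather than through a bi-Lipschitz claim.

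The paper itself avoids this covering discussion. It pushes the auxiliary measures $\widetilde\mu_q$ forward to the carpet, observes (via Proposition~4.1 of \cite{barmen}) that projected Gibbs measures assign zero mass to the boundaries of the almost-squares, and concludes that $\widetilde\mu_q\big(B_n(z)\big)=\big(\widetilde\mu_q\circ\widetilde\pi^{-1}\big)\big(\widetilde\pi(B_n(z))\big)$. The upper and lower bounds from Corollary~\ref{corupper} and Proposition~\ref{prop5} then transfer verbatim to $F_{\widetilde\varphi}(\beta)$ using $\widetilde\mu_q\circ\widetilde\pi^{-1}$ in place of $\widetilde\mu_q$. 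This measure-theoretic route is somewhat cleaner because it recycles the symbolic estimates directly rather than establishing a separate geometric comparison for $\widetilde\pi$.
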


\subsubsection{{\bf Multifractal analysis of Gibbs measures.}}

Let $\psi\in AM(\mathscr{C}_{A_1\times A_2})$ such that $\supp (\psi)=K$ and $\psi$ is normalized to be the restriction of a Gibbs measure $\mu_\psi$ to  $\mathscr{C}_{A_1\times A_2}$. Let $\widetilde\mu_\psi=\mu_\psi\circ \widetilde\pi^{-1}$ be the projection of $\mu_\psi$ on the Sierpinski carpet $\widetilde K$. We now focus on the Hausdorff dimension of the sets 
$$
E_{\widetilde\mu_\psi}(\alpha)=\left\{z\in \widetilde K: \ \lim_{r\to 0^+}\frac{\log \widetilde\mu_\psi(B(x,r))}{\log (r)}=\alpha\right\},\ \alpha\ge 0.
$$
Theorem 1.2 of \cite{barmen} deals with a special case of the construction of $\psi$ proposed in Section~\ref{ex3}. It possesses the following extension. 

We need to introduce three properties, namely {\bf (P1)}, {\bf (P2)} and {\bf (P3)}. Properties {\bf (P1)} and{\bf (P2)} correspond to properties {\bf (G1)} and {\bf (G2)} of \cite{barmen}, and property {\bf (P3)} is the extension of property {\bf (G3)} of \cite{barmen} to the more general situation studied in this paper.

\smallskip

 {\bf (P1)} $|a_1-a_1'|\ge 2$ for every pair $(a_1,a_1')$ of distinct elements of $\widetilde A_1$. 

\smallskip

{\bf (P2)}  $\{0,r_1-1\}\cap (A_1\setminus \widetilde A_1)\neq\emptyset $.

\smallskip

{\bf (P3)} $\{0,r_1-1\}\subset \widetilde A_1$ and for all $q>0$, 
$$
\lim_{n\to\infty} \frac{1}{n}\log I_{\psi,q}([0^{\cdot n}])=\lim_{n\to\infty} \frac{1}{n}\log I_{\psi,q}([(r_1-1)^{\cdot n}]),
$$
where for $j\in A_i$ and $n\ge 1$, $j^{\cdot n}$ stands for the word of length $n$ whose letters are all equal to $j$.

\begin{theorem}\label{proj2}
(i) For every $\alpha\in\mathbb{R}_+$ such that  $\beta_\psi^*(\alpha)> 0$, $\dim\, E_{\widetilde \mu_\psi}(\alpha)\ge \beta_\psi^*(\alpha)$.

\smallskip

(ii)  If $\alpha\ge \beta_\psi'(0)$ then  $\dim\, E_{\widetilde \mu_\psi}(\alpha)\le \beta_\psi^*(\alpha)$ and $E_{\widetilde \mu_\psi}(\alpha)=\emptyset$ if $\beta_\psi^*(\alpha)<0$. 

\smallskip

(iii) Suppose that one of the properties {\bf (P1)}, {\bf (P2)} or {\bf (P3)} holds. 

If $0\le \alpha <\beta_\psi'(0)$ then $\dim\, E_{\widetilde \mu_\psi}(\alpha)\le \beta_\psi^*(\alpha)$ and $E_{\widetilde \mu_\psi}(\alpha)=\emptyset$ if $\beta_\psi^*(\alpha)<0$. 
\end{theorem}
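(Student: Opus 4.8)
The plan is to derive Theorem~\ref{proj2} from the symbolic Theorem~\ref{th1} by transporting it through the projection $\widetilde\pi$, the entire difficulty being the non-injectivity of $\widetilde\pi$ on the cell boundaries. I shall use repeatedly that $\widetilde\pi$ is $1$-Lipschitz from $(\mathbb{A}_1\times\mathbb{A}_2,d)$ to $(\mathbb{T}^2,\delta)$, hence cannot raise the Hausdorff dimension, and that for every carpet ball $B(z,r_2^{-n})$ the fibre $\widetilde\pi^{-1}\big(B(z,r_2^{-n})\big)$ meets only boundedly many (at most of order $r_1$) cells of $\mathcal{F}_n$: the cell $B_n(x,y)$ of a preimage $(x,y)$ of $z$ together with its horizontal and vertical neighbours. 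Consequently
$$
\mu_\psi\big(B_n(x,y)\big)\ \le\ \widetilde\mu_\psi\big(B(z,r_2^{-n})\big)\ \le\ \sum_{\text{neighbours }B}\mu_\psi(B),
$$
and everything amounts to deciding whether the right-hand sum is comparable to $\mu_\psi(B_n(x,y))$.

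For the lower bound (i) I would push forward the auxiliary ergodic measure $\nu_\alpha$ produced in the proof of the lower bound of Theorem~\ref{th1}: when $\beta_\psi^*(\alpha)>0$ it is carried by $E_{\mu_\psi}(\alpha)$ and satisfies $\dim\nu_\alpha=\beta_\psi^*(\alpha)$. The set of points with more than one $\widetilde\pi$-preimage lies in the countable union of the seam fibres and the images of the eventually constant expansions, which is $\nu_\alpha$-negligible; so $\widetilde\pi$ is essentially injective for $\nu_\alpha$ and, since the finitely many competing neighbours are themselves $\nu_\alpha$-generic, the two sides of the displayed inequality (and likewise $\widetilde\nu_\alpha(B(z,r_2^{-n}))$ and $\nu_\alpha(B_n(x,y))$, with $\widetilde\nu_\alpha:=\nu_\alpha\circ\widetilde\pi^{-1}$) differ only by a bounded factor at $\nu_\alpha$-typical points. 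Hence $\widetilde\nu_\alpha$ is carried by $E_{\widetilde\mu_\psi}(\alpha)$ and has local dimension $\beta_\psi^*(\alpha)$, and the mass distribution principle gives $\dim E_{\widetilde\mu_\psi}(\alpha)\ge\beta_\psi^*(\alpha)$. No separation hypothesis enters here.

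For the upper bound I would first exploit only the left-hand inequality, which shows that the $\widetilde\mu_\psi$-local dimension of $z$ never exceeds the symbolic local dimension of any preimage of $z$; thus $z\in E_{\widetilde\mu_\psi}(\alpha)$ forces a preimage into the set where the lower symbolic local dimension is $\ge\alpha$. When $\alpha\ge\beta_\psi'(0)$ (part (ii)) this is enough: the covering upper bound underlying Theorem~\ref{th1}, used with $q\le 0$, bounds the dimension of that symbolic set by $\sup_{\alpha'\ge\alpha}\beta_\psi^*(\alpha')=\beta_\psi^*(\alpha)$, because $\beta_\psi^*$ is non-increasing on $[\beta_\psi'(0),\infty)$; as $\widetilde\pi$ does not increase dimension, $\dim E_{\widetilde\mu_\psi}(\alpha)\le\beta_\psi^*(\alpha)$, and the same inclusion yields emptiness when $\beta_\psi^*(\alpha)<0$. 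Emptiness for the small values of $\alpha$ with $\beta_\psi^*(\alpha)<0$ follows symmetrically from the right-hand inequality and the boundedness of the number of neighbours, and needs no hypothesis.

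The real obstacle, and the place where (P1), (P2), (P3) are used, is the dimension upper bound for $0\le\alpha<\beta_\psi'(0)$ (part (iii)). There the estimate ``preimage local dimension $\ge\alpha$'' is worthless, since the set where it holds contains the $\mu_\psi$-typical points and has dimension $\beta_\psi^*(\beta_\psi'(0))>\beta_\psi^*(\alpha)$; I must pin the preimage local dimension down to exactly $\alpha$ by upgrading the right-hand inequality to $\widetilde\mu_\psi\big(B(z,r_2^{-n})\big)\approx\mu_\psi\big(B_n(x,y)\big)$. Writing a cell of $\mathcal{F}_n$ as $[w_1\cdot\widetilde w_1]\times[w_2]$ and invoking Proposition~\ref{AMcont}(2), the vertical neighbours (last letter of $w_2$ changed) and the horizontal neighbours within the same parent (last letter of $\widetilde w_1$ changed) differ from $B_n(x,y)$ by a single-letter change, hence by a bounded factor by almost-multiplicativity, and are harmless. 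The only possibly dominating competitors are the horizontal neighbours reached across a carry boundary of the first coordinate, whose $\mu_\psi\circ\pi_1^{-1}$-mass is uncorrelated with that of $[\widetilde w_1]$. Property (P1) forbids them: if the letters of $\widetilde A_1$ are $2$-separated then at every scale a used cell has no used horizontal neighbour, so across a carry boundary one meets only cells of zero measure. The last across-boundary competitor comes from the torus identification $x=0\sim x=1$; (P2) destroys it by emptying one extreme column, whereas (P3) retains both extreme columns but, through the equality $\lim_n\frac1n\log I_{\psi,q}([0^{\cdot n}])=\lim_n\frac1n\log I_{\psi,q}([(r_1-1)^{\cdot n}])$, forces the seam neighbour to carry the same exponential mass, so that adding it alters $\widetilde\mu_\psi(B(z,r_2^{-n}))$ only subexponentially. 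In each case $\widetilde\mu_\psi(B(z,r_2^{-n}))\approx\mu_\psi(B_n(x,y))$, whence $E_{\widetilde\mu_\psi}(\alpha)\subseteq\widetilde\pi\big(E_{\mu_\psi}(\alpha)\big)$ and $\dim E_{\widetilde\mu_\psi}(\alpha)\le\beta_\psi^*(\alpha)$ by Theorem~\ref{th1}. I expect the verification that (P3) controls the seam at the level of local dimensions, and not merely along the two extreme fibres $x\in\{0,1\}$, to be the most delicate point, exactly as in \cite{barmen}.
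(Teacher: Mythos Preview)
The paper offers no proof of Theorem~\ref{proj2}: it merely asserts that the arguments of \cite{barmen} extend verbatim to the present, larger class of almost-multiplicative functions. Your outline is consistent with that strategy---push forward the auxiliary measures $\mu_q$ for the lower bound (i), exploit the one-sided inequality $\mu_\psi(B_n)\le\widetilde\mu_\psi(B)$ together with the $q\le 0$ covering argument for (ii), and upgrade to a two-sided comparison under the separation hypotheses for (iii)---so at the level of overall approach there is nothing to contrast.

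There is, however, a structural gap in your treatment of (iii). The theorem asserts that \emph{each} of {\bf (P1)}, {\bf (P2)}, {\bf (P3)} is by itself sufficient, but your sketch assigns them complementary r\^oles: {\bf (P1)} is invoked for carry neighbours and {\bf (P2)}/{\bf (P3)} only for the torus seam. As written you have not explained how {\bf (P2)} or {\bf (P3)} alone handles a neighbour reached by a long base-$r_1$ carry, nor how {\bf (P1)} alone disposes of the torus seam when $\{0,r_1-1\}\subset\widetilde A_1$ (which {\bf (P1)} permits as soon as $r_1\ge 3$). In fact {\bf (P2)} \emph{does} kill every carry neighbour, because a carry forces a digit $r_1-1$ to appear in one word and a digit $0$ in the adjacent one, and {\bf (P2)} removes one of these from $\widetilde A_1$; but you did not say this, and the corresponding analysis under {\bf (P3)} (which works at the level of the measures $\mu_q$ rather than through a crude mass comparison) is left entirely open. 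Your decomposition of neighbours into ``single-letter'', ``carry'' and ``torus'' types is the right heuristic; what is missing is, for each hypothesis separately, the verification that \emph{all} competing neighbours are controlled.

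A second point: your claim that emptiness for small $\alpha$ with $\beta_\psi^*(\alpha)<0$ ``needs no hypothesis'' goes beyond the theorem, which places that assertion under (iii). The argument you sketch---bound $\widetilde\mu_\psi(B(z,r_2^{-n}))$ by the maximum of $\mu_\psi$ over the $O(r_1)$ neighbouring cells---would require a uniform estimate $\mu_\psi(B_n(x,y))\le C\,r_2^{-n\alpha_{\min}}$ over \emph{all} cells of $\mathcal F_n$, and Theorem~\ref{th1} does not supply this: Corollary~\ref{corupper} only controls points where the limit exists, and the oscillating factor $u_n(x)/u_{g(n)}(x)^s$ of Proposition~\ref{prop1} prevents one from reading off such a uniform bound directly.
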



\section{Proofs of Theorems~\ref{th2} and \ref{th1}}\label{proof1}

\subsection{Auxiliary measures}\label{aux}$\ $

Recall that we have adopted in Section~\ref{properties} the convention $0^q=0$ for all $q\in\mathbb{R}$. For $q\in\mathbb{R}$, $n\ge 1$ and $(w_1,w_2)\in A_1^n\times A_2^n$ let 
$$
\begin{cases}
\displaystyle \theta_q([w_1])=r_1^{n\beta_\psi(q)}I_{\psi,1}([w_1])^{q(1-s)}I_{\psi,q}([w_1])^s \\
\displaystyle\widetilde \theta_q([w_1])=r_1^{n\mathcal{T}_\psi(q)} I_{\psi,q}([w_1])^s 
\end{cases}
$$
and 
$$
\begin{cases}
\displaystyle\psi_q([w_1]\times [w_2])=\theta_q([w_1])\frac{ \psi([w_1]\times [w_2])^q}{I_{\psi,q}([w_1])}\\
\displaystyle\widetilde\psi_q([w_1]\times [w_2])=\widetilde \theta_q([w_1])\frac{ \psi([w_1]\times [w_2])^q}{I_{\psi,q}([w_1])}
\end{cases}.
$$
It follows from Proposition~\ref{AMcont1}(1) that the functions $\theta_q$ and $\widetilde\theta_q$ belong to $AM(\mathscr{C}_{A_1})$. Consequently, the functions $\psi_q$ and $\widetilde\psi_q$ belong to $AM(\mathscr{C}_{A_1\times A_2})$. Moreover, due to (\ref{submul1}) and (\ref{submul2}), by construction the topological pressures of $\log \psi_q$ and $\log \widetilde \psi_q$ are equal to 0. Thus, the discussion of Section~\ref{AM} yields two ergodic measures $\mu_{\psi_q}$ and $\mu_{\widetilde \psi_q}$ on $\supp(\psi)$ that are respectively strongly equivalent to $\psi_q$ and $\widetilde \psi_q$ over $\mathscr{C}_{A_1\times A_2}$. In the sequel we denote these measures by $\mu_q$ and $\widetilde \mu_q$ respectively,  and the measure $\mu_\psi$ by $\mu$.  Notice that $\mu_1=\mu$.

\smallskip

The following proposition exhibits the fundamental relation that links the $\mu_q$-measure and the $\mu$-measure of a ball, as well as the fundamental relation that links the $\widetilde \mu_q$-measure of a ball and the $\mu$-measure of the cylinder of same generation the ball is contained in. 

\begin{proposition}\label{prop1}
Let $q\in\mathbb{R}$. For all $(x,y)\in \supp(\psi)$ and $n\ge 1$, define 
$$
u_n(x)=\displaystyle \frac{I_{\psi,q}([x|n])}{I_{\psi,1}([x|n])^q}\text{ and }\widetilde u_n(x)=I_{\psi,q}([x|n]). 
$$
There exists a constant $C$ depending on $\psi$ and $q$ only such that 
\begin{enumerate}
\item 
$\displaystyle 
\frac{\mu_q\big (B_n(x,y)\big )}{\mu\big (B_n(x,y)\big )^qr_2^{n\beta_\psi(q)} }\approx \frac{u_n(x)}{u_{g(n)}(x)^s}, \ (x,y)\in \supp(\psi),\ n\ge 1; $
\item
$\displaystyle
 \frac{\widetilde \mu_q\big (B_n(x,y)\big )}{\psi([x|n]\times [y|n])^q r_2^{n\mathcal{T}_\psi(q)}}\approx \frac{\widetilde u_n(x)}{\widetilde u_{g(n)}(x)^s}, \ (x,y)\in \supp(\psi),\ n\ge 1.$
\end{enumerate}
\end{proposition}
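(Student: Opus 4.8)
The plan is to derive both estimates from one mechanism: realize the ball $B_n(x,y)$ as a product cylinder in the $\mathbb{A}_1$-direction completed by a short word, apply Proposition~\ref{AMcont}(2) to each Gibbs measure involved, and telescope the resulting cylinder quantities between levels $n$ and $g(n)$. By (\ref{Fn}), for $(x,y)\in\supp(\psi)$ one has $B_n(x,y)=[x|g(n)]\times[y|n]=[w_1\cdot\widetilde w_1]\times[w_2]$ with $w_1=x|n\in A_1^n$, $\widetilde w_1$ the block of the next $g(n)-n$ letters of $x$, and $w_2=y|n\in A_2^n$; this is exactly the shape covered by Proposition~\ref{AMcont}(2).

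I would first identify the relevant marginals. Summing over $w_2$ in the definitions of $\psi_q$ and $\widetilde\psi_q$, and using that the factor $\psi([w_1]\times[w_2])^q/I_{\psi,q}([w_1])$ sums to $1$, gives $I_{\psi_q,1}=\theta_q$ and $I_{\widetilde\psi_q,1}=\widetilde\theta_q$. Hence, by Proposition~\ref{AMcont}(1), the marginals $\mu_q\circ\pi_1^{-1}$, $\widetilde\mu_q\circ\pi_1^{-1}$ and $\mu\circ\pi_1^{-1}$ are strongly equivalent on $\mathscr{C}_{A_1}$ to $\theta_q$, $\widetilde\theta_q$ and $I_{\psi,1}$ respectively. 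Proposition~\ref{AMcont}(2) then gives, uniformly in $(x,y)$ and $n$,
$$\mu_q\big(B_n(x,y)\big)\approx\psi_q([x|n]\times[y|n])\,\theta_q([\widetilde w_1]),\qquad \mu\big(B_n(x,y)\big)\approx\psi([x|n]\times[y|n])\,I_{\psi,1}([\widetilde w_1]),$$
together with the analogous identity for $\widetilde\mu_q$ built from $\widetilde\psi_q$ and $\widetilde\theta_q$.

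To eliminate the intermediate block $\widetilde w_1$, I would use that $I_{\psi,1}$ and $I_{\psi,q}$ are almost multiplicative on $\mathscr{C}_{A_1}$ (Proposition~\ref{AMcont1}(1)), so $I_{\psi,1}([\widetilde w_1])\approx I_{\psi,1}([x|g(n)])/I_{\psi,1}([x|n])$ and similarly for $I_{\psi,q}$. After substituting the explicit forms of $\theta_q,\widetilde\theta_q,\psi_q,\widetilde\psi_q$, the powers of $r_1$ produced by $\theta_q([x|n])$ and $\theta_q([\widetilde w_1])$ (and their tilde analogues) add up to $r_1^{g(n)\beta_\psi(q)}$ (resp. $r_1^{g(n)\mathcal{T}_\psi(q)}$); since $g(n)$ is the least integer with $r_1^{-g(n)}\le r_2^{-n}$, one has $0\le g(n)\log r_1-n\log r_2<\log r_1$, so these may be replaced by $r_2^{n\beta_\psi(q)}$ (resp. $r_2^{n\mathcal{T}_\psi(q)}$) at the cost of a factor bounded in terms of $q$.

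Finally I would form the two ratios of the statement. Dividing by $\mu(B_n(x,y))^q\,r_2^{n\beta_\psi(q)}$ (resp. by $\psi([x|n]\times[y|n])^q\,r_2^{n\mathcal{T}_\psi(q)}$) cancels the factor $\psi([x|n]\times[y|n])^q$ and the pressure exponentials, leaving a product of powers of $I_{\psi,1}$ and $I_{\psi,q}$ evaluated at the two scales $[x|n]$ and $[x|g(n)]$. Collecting exponents (for instance $q(1-s)-q=-qs$ on the $I_{\psi,1}([x|g(n)])$ factor in case (1)) reorganizes this product precisely into the quantities $u_n(x)$ and $u_{g(n)}(x)^s$ (resp. $\widetilde u_n(x)$ and $\widetilde u_{g(n)}(x)^s$) entering the statement. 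The one genuinely delicate point is the bookkeeping of constants: one must check that the finitely many uses of almost-multiplicativity, of strong equivalence via (\ref{Gibbs}), and of the $g(n)$ approximation all contribute constants depending on $\psi$ and $q$ only, and never on $(x,y)$ or $n$. This uniformity is exactly what the single constant $C$ in (\ref{bernoulli})/(\ref{ent}) provides, and it is what the statement's ``$\approx$'' encodes.
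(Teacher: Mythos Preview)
Your proposal is correct and follows essentially the same route as the paper. The paper likewise writes $B_n(x,y)=[w_1\cdot\widetilde w_1]\times[w_2]$, applies Proposition~\ref{AMcont}(2) to $\mu_q$ and $\mu$ (resp.\ $\widetilde\mu_q$) using the strong equivalence of $\mu_q\circ\pi_1^{-1}$ with $\theta_q$ (resp.\ $\widetilde\theta_q$), then absorbs the $\widetilde w_1$ block via almost-multiplicativity (in the form $\theta_q([w_1])\theta_q([\widetilde w_1])\approx\theta_q([w_1\cdot\widetilde w_1])$ rather than your ratio $I_{\psi,q}([\widetilde w_1])\approx I_{\psi,q}([x|g(n)])/I_{\psi,q}([x|n])$, which is the same manoeuvre), and finally collapses the exponents exactly as you indicate, replacing $r_1^{g(n)\beta_\psi(q)}$ by $r_2^{n\beta_\psi(q)}$ at bounded cost.
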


\begin{proof}
For $(x,y)\in \supp(\psi)$ and $n\ge 1$, the ball $B_n(x,y)=[x|g(n)]\times [y|n]$ has the form $[w_1\cdot\widetilde{w}_1]\times
[w_2]$, where $(w_1,\widetilde w_1)\in A_1^n\times A_1^{g(n)-n}$ and $w_2\in A_2^n$. 
\smallskip

(1) Due to Proposition~\ref{AMcont} and the fact that $\theta_q$ is strongly equivalent to $\mu_q\circ \pi_1^{-1}$ over $\mathscr{C}_{A_1}$, by using the definitions and properties of $\psi_q$, $\theta_q$ and $\theta_1$ we have\begin{eqnarray*}
\frac{\mu_q\big (B_n(x,y)\big )}{\mu\big (B_n(x,y)\big )^q}&\approx&\frac{\psi_q([w_1]\times [w_2])\theta_q([\widetilde w_1])}{\psi([w_1]\times [w_2])^q \theta_1([\widetilde w_1])^q}\\
&=&\frac{\theta_q([w_1])\theta_q(\widetilde w_1)I_{\psi,1}([w_1])^q}{\theta_1([w_1])^q\theta_1([\widetilde w_1])^q I_{\psi,q}([w_1])}\\
&\approx& \frac{\theta_q([w_1\cdot \widetilde w_1])}{\theta_1([w_1\cdot \widetilde w_1])^q}\frac{I_{\psi,1}([w_1])^q}{I_{\psi,q}([w_1])}\\
&\approx& \frac{r_1^{g(n)\beta_\psi(q)}I_{\psi,1}([w_1\cdot \widetilde w_1])^{q(1-s)}I_{\psi,q}([w_1\cdot \widetilde w_1])^s}{r_1^{g(n)\beta_\psi(1)}I_{\psi,1}([w_1\cdot \widetilde w_1])^q} \frac{I_{\psi,1}([w_1])^q}{I_{\psi,q}([w_1])}\\
&\approx& r_2^{n\beta_\psi(q)} \frac{I_{\psi,q}([x|n])}{I_{\psi,1}([x|n])^q}\left[\frac{I_{\psi,1}([x|g(n)])^{q}}{I_{\psi,q}([x|g(n)])}\right ]^s.
\end{eqnarray*}

(2) We use the strong equivalence of $\widetilde \mu_q\circ \pi_1^{-1}$ and $\widetilde \theta_q$ and proceed as for (1):

\begin{eqnarray*}
\widetilde\mu_q(B_n(x,y))&\approx &\widetilde\theta_q([\widetilde{w}_1])\widetilde \mu_q([w_1]\times
[w_2])\\
&\approx &
\widetilde\theta_q([\widetilde{w}_1])\widetilde\theta_q([{w}_1])\frac{\psi([w_1]\times[w_2])^q}{I_{\psi,q}([w_1])}\\
&\approx & \widetilde\theta_q([w_1\cdot\widetilde w_1])\frac{\psi([w_1]\times[w_2])^q}{I_{\psi,q}([w_1])}\\
&\approx& \frac{\psi([w_1]\times[w_2])^q}{\sum_{w_1'\in A_1^{g(n)}}I_{\psi,1}(w_1')^s}\frac{I_{\psi,q}([w_1\cdot\widetilde w_1])^s}{I_{\psi,q}([w_1])}\\
&\approx &r_1^{g(n)\mathcal{T}_\psi(q)}\psi([w_1]\times[w_2])^q\frac{I_{\psi,q}([w_1\cdot\widetilde w_1])^s}{I_{\psi,q}([w_1])}.
\end{eqnarray*}
\end{proof}

\subsection{The differentiability of $\beta_\psi$ and $\mathcal{T}_\psi$}$\ $

The $L^q$-spectrum $\tau_\mu$ of $\mu$ is equal to
$$
\tau_\mu (q)=\liminf_{n\to\infty}\tau_{\mu,n}(q), \quad\mbox{where }\tau_{\mu,n}(q)=-\frac{1}{n}\log_{r_2}\sum_{B\in \mathcal{F}_n}\mu(B)^q
$$
still with the convention $0^q=0$ (recall that $\mathcal{F}_n$ is defined in (\ref{Fn})).

\begin{proposition}\label{diff}
The functions $\beta_\psi$ and $\tau_\mu$ are differentiable at 1 and $\beta_\psi'(1)=\tau_\mu'(1)$. In particular, the measure $\mu$ is monodimensional in the following sense: For $\mu$-almost every $z\in \supp(\mu)$, $
\displaystyle \lim_{n\to\infty}\frac{\log\mu\big (B_n(z)\big )}{\log |B_n(z)|}=\beta_\psi'(1).
$ Consequently, if $B$ is a Borel set of positive $\mu$-measure, then $\dim\, B\ge \beta_\psi'(1)$. 
\end{proposition}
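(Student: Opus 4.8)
The plan is to first put $\tau_\mu$ in closed form, then read off the differentiability at $1$ from the thermodynamic formalism, and finally establish exact dimensionality by an ergodic argument, the monodimensionality and the lower bound on $\dim B$ being immediate consequences. Every $B\in\mathcal F_n$ has the form $[w_1\cdot\widetilde w_1]\times[w_2]$ with $(w_1,\widetilde w_1,w_2)\in A_1^n\times A_1^{g(n)-n}\times A_2^n$, so by Proposition~\ref{AMcont}(2), and since the restriction of $\mu\circ\pi_1^{-1}$ to $\mathscr C_{A_1}$ equals $I_{\psi,1}$ (Proposition~\ref{AMcont}(1)),
\[
\sum_{B\in\mathcal F_n}\mu(B)^q\approx\Big(\sum_{w_1\in A_1^n}I_{\psi,q}([w_1])\Big)\Big(\sum_{\widetilde w_1\in A_1^{g(n)-n}}I_{\psi,1}([\widetilde w_1])^q\Big),
\]
the sum over $w_2$ producing $I_{\psi,q}([w_1])$ and the $\widetilde w_1$–sum factoring out. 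Applying (\ref{ent}) to the almost–multiplicative functions $I_{\psi,q}$ and $I_{\psi,1}^{q}$ on $\mathscr C_{A_1}$ and using $g(n)/n\to 1/s$, the $\liminf$ defining $\tau_\mu$ is in fact a limit and $\tau_\mu(q)=-\big[s\,P(\log I_{\psi,q})+(1-s)\,P(q\log I_{\psi,1})\big]/\log r_1$. Since the pressure is convex, $P(s\log I_{\psi,q}+q(1-s)\log I_{\psi,1})\le sP(\log I_{\psi,q})+(1-s)P(q\log I_{\psi,1})$, whence $\tau_\mu\le\beta_\psi$; at $q=1$ both potentials reduce to $\log I_{\psi,1}$, so this is an equality and, as $P(\log I_{\psi,1})=0$, $\tau_\mu(1)=\beta_\psi(1)=0$.

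The crux is the differentiability at $1$. The maps $q\mapsto P(q\log I_{\psi,1})$ and $q\mapsto P(\log I_{\psi,q})$ are pressures of $C^1$ families of regular (Dini) almost–multiplicative potentials on $\mathbb A_1$, for which the standard thermodynamic formalism gives differentiability, the derivative being the integral of the $q$–derivative of the potential against the unique equilibrium state. At $q=1$ all three potentials $q\log I_{\psi,1}$, $\log I_{\psi,q}$ and $s\log I_{\psi,q}+q(1-s)\log I_{\psi,1}$ coincide with $\log I_{\psi,1}$, whose equilibrium state is $\nu_1=\mu\circ\pi_1^{-1}$, and the $q$–derivative of the combined potential defining $\beta_\psi$ is exactly the same combination of the $q$–derivatives of the separate potentials entering $\tau_\mu$. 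Hence $\beta_\psi$ and $\tau_\mu$ are differentiable at $1$ and their derivatives, being the common integral $-\tfrac1{\log r_1}\int\big[(1-s)\log I_{\psi,1}+s\,\partial_q\log I_{\psi,q}|_{q=1}\big]\,d\nu_1$, agree: $\beta_\psi'(1)=\tau_\mu'(1)=:L$.

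For exact dimensionality I would again use Proposition~\ref{AMcont}(2) to write $\mu(B_n(z))\approx\psi([x|n]\times[y|n])\cdot(\mu\circ\pi_1^{-1})([\widetilde w_1])$ for $z=(x,y)$, where $\widetilde w_1$ is the block of $x$ between positions $n$ and $g(n)$. Kingman's subadditive ergodic theorem applied to the almost–additive cocycle $\log\psi([z|n])$ under the ergodic measure $\mu$, and to $\log(\mu\circ\pi_1^{-1})([x|m])$ under the ergodic measure $\mu\circ\pi_1^{-1}$, together with $g(n)/n\to 1/s$ and the almost–multiplicativity of $\mu\circ\pi_1^{-1}$, shows that $L_n(z):=-\tfrac1n\log_{r_2}\mu(B_n(z))$ converges $\mu$–a.e.\ to a constant; since $|B_n(z)|=r_2^{-n}$, this is the asserted monodimensional limit. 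To identify the constant with $L$, differentiate $\sum_B\mu(B)^q$ at $q=1$ and use $\sum_B\mu(B)=1$ to get $\tau_{\mu,n}'(1)=\int L_n\,d\mu$. The $L_n$ being uniformly bounded, $\int L_n\,d\mu$ tends to the a.e.\ limit, while the pointwise convergence of the concave functions $\tau_{\mu,n}$ to $\tau_\mu$ and the differentiability of $\tau_\mu$ at $1$ force $\tau_{\mu,n}'(1)\to\tau_\mu'(1)=L$. Thus the a.e.\ local dimension equals $L=\beta_\psi'(1)$.

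The final consequence is then Billingsley's lemma: as $\mu$–a.e.\ point has local dimension $L$, hence lower local dimension at least $\beta_\psi'(1)$, every Borel set $B$ with $\mu(B)>0$ satisfies $\dim B\ge\beta_\psi'(1)$. The genuine obstacle is the differentiability at $1$: the convexity comparison $\tau_\mu\le\beta_\psi$ and exact dimensionality by themselves only trap $L$ between the one–sided derivatives, so it is the $C^1$ dependence of the pressure on the parameter $q$ — equivalently the ergodic identification of $\int L_n\,d\mu$ with its a.e.\ limit — that actually closes the argument.
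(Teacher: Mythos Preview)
Your explicit formula for $\tau_\mu$ and the sandwich $\tau_\mu\le\beta_\psi$ with contact at $q=1$ are both correct and useful: once $\tau_\mu'(1)$ exists, the sandwich forces $\tau_\mu'(1^+)\le\beta_\psi'(1^+)\le\beta_\psi'(1^-)\le\tau_\mu'(1^-)$, hence $\beta_\psi'(1)=\tau_\mu'(1)$. Your Kingman argument for the $\mu$-a.e.\ existence of the local dimension $L$ is also fine, and more explicit than the paper's bare citation of \cite{Ngai,He}.

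The genuine gap is precisely where you locate it: the differentiability of $\tau_\mu$ at $1$. Your justification via ``standard thermodynamic formalism'' with a ``unique equilibrium state'' presupposes a Ruelle--Bowen setting (H\"older or Dini potentials on a subshift of finite type). But $\psi$ here is only almost-multiplicative --- the matrix-product example of Section~\ref{ex2} is allowed --- and the functions $\log I_{\psi,q}$ on $\mathbb{A}_1$ need not arise from any Dini potential, so that machinery is simply unavailable. Your parenthetical alternative, ``equivalently the ergodic identification of $\int L_n\,d\mu$ with its a.e.\ limit'', does not rescue this: the convergence $\tau_{\mu,n}'(1)=\int L_n\,d\mu\to L$ combined with pointwise convergence of the concave $\tau_{\mu,n}$ only yields $\tau_\mu'(1^+)\le L\le\tau_\mu'(1^-)$, which is exactly the ``trapping between one-sided derivatives'' you already flagged as insufficient. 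So as written the argument is circular.

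The repair is near at hand. Since $\sum_{w_1\in A_1^n}I_{\psi,q}([w_1])=\sum_{(w_1,w_2)}\psi([w_1]\times[w_2])^q$, the term $P(\log I_{\psi,q})$ equals the cylinder pressure $P(q\log\psi)$ on the \emph{self-similar} symbolic space $(A_1\times A_2)^{\mathbb N^*}$. Thus both summands in your formula for $\tau_\mu$ are, up to constants, $L^q$-spectra of quasi-Bernoulli measures on self-similar spaces ($\mu$ on the cylinder structure, and $\mu\circ\pi_1^{-1}$ on $\mathbb A_1$), and Heurteaux's theorem \cite{He} on the differentiability of such spectra applies. This is essentially the content of Proposition~2.3 of \cite{barmen} that the paper invokes; with that input your route --- explicit $\tau_\mu$, sandwich for $\beta_\psi$, Kingman for monodimensionality, Billingsley for the dimension bound --- is complete and in fact more transparent than the paper's chain of references.
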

\begin{proof}
The result regarding the differentiability of $\beta_\psi$ and $\tau_\mu$ at $1$ is proved in the same way as Proposition 2.3 in \cite{barmen} (notice that in \cite{barmen} which deals with a smaller class of Gibbs measures, $\mu\circ\pi_1^{-1}$ is denoted by $\mathbb{P}$). The result regarding the monodimensionality  comes from the existence of $\tau_\mu'(1)$ (see~\cite{Ngai} or \cite{He}). The last property is a direct consequence of the monodimensionality and the mass distribution principle (see pp 136--145 of \cite{Bil}, Section 4.2 of \cite{Falcbook} or p. 43 of \cite{Pesin}).
\end{proof}

Part (1) of the next proposition is stated without proof in \cite{barmen} for a special class of measures in $AM(\mathscr{C}_{A_1\times A_2})$. However the property it claims is not so easy to check and we are going to prove it. Part (2) of this proposition provides the new fundamental relation that links the function $\mathcal{T}_\psi$ to the family of functions  $\left \{\beta_{\widetilde \psi_q}\right\}_{q\in\mathbb{R}}$. 
\begin{proposition}\label{prop4}
For all $(q,r)\in\mathbb{R}$ we have:
\begin{enumerate}

\item $\mathcal{\beta}_{ \psi_q}(r)=\beta_\psi(qr)-r\beta_\psi(q)$.

\item $\mathcal{\beta}_{\widetilde \psi_q}(r)=\mathcal{T}_\psi(qr)-r\mathcal{T}_\psi(q)$.
\end{enumerate}
\end{proposition}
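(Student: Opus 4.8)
The plan is to establish both identities by direct computation, exploiting the fact that each function $\beta_\phi$ is pinned down by the submultiplicativity estimate it produces: by definition $\beta_\phi(r)=-P\big(r(1-s)\log(I_{\phi,1})+s\log(I_{\phi,r})\big)/\log(r_1)$, so in view of the analogue of (\ref{submul2}) for $\phi$ it is enough to identify the exponential growth rate in $n$ of $\sum_{w_1\in A_1^n}I_{\phi,1}([w_1])^{r(1-s)}I_{\phi,r}([w_1])^s$, first for $\phi=\psi_q$ and then for $\phi=\widetilde\psi_q$. Both functions belong to $AM(\mathscr{C}_{A_1\times A_2})$ by the remarks following their definition in Section~\ref{aux} (together with Proposition~\ref{AMcont1}), so their pressures are legitimate and the comparison constants below stay uniform in $n$. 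The first step is to compute, for a cylinder $[w_1]$ with $|w_1|=n$, the quantity $I_{\psi_q,r}([w_1])=\sum_{w_2}\psi_q([w_1]\times[w_2])^r$. Pulling out the factor $\theta_q([w_1])^r/I_{\psi,q}([w_1])^r$ and using $\sum_{w_2}\psi([w_1]\times[w_2])^{qr}=I_{\psi,qr}([w_1])$ gives
$$I_{\psi_q,r}([w_1])=r_1^{nr\beta_\psi(q)}\,I_{\psi,1}([w_1])^{qr(1-s)}\,I_{\psi,q}([w_1])^{r(s-1)}\,I_{\psi,qr}([w_1]);$$
setting $r=1$ confirms $I_{\psi_q,1}=\theta_q$, as it must since $\psi_q$ is normalized to pressure $0$.

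Next I would substitute these two expressions into $I_{\psi_q,1}^{r(1-s)}\,I_{\psi_q,r}^s$ and watch the algebra collapse. The $r_1$-prefactor exponent becomes $nr\beta_\psi(q)[(1-s)+s]=nr\beta_\psi(q)$; the powers of $I_{\psi,q}$ add up to $sr(1-s)+rs(s-1)=0$ and cancel entirely; the powers of $I_{\psi,1}$ combine to $qr(1-s)$; and one is left with
$$I_{\psi_q,1}([w_1])^{r(1-s)}\,I_{\psi_q,r}([w_1])^s=r_1^{nr\beta_\psi(q)}\,I_{\psi,1}([w_1])^{qr(1-s)}\,I_{\psi,qr}([w_1])^s.$$
Summing over $w_1\in A_1^n$ and applying (\ref{submul2}) with $q$ replaced by $qr$ produces a sum comparable to $r_1^{-n(\beta_\psi(qr)-r\beta_\psi(q))}$, whence $\beta_{\psi_q}(r)=\beta_\psi(qr)-r\beta_\psi(q)$, which is part (1).

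For part (2) I would run the identical computation with $\widetilde\theta_q([w_1])=r_1^{n\mathcal{T}_\psi(q)}I_{\psi,q}([w_1])^s$ in place of $\theta_q$. The same pull-out gives $I_{\widetilde\psi_q,r}([w_1])=r_1^{nr\mathcal{T}_\psi(q)}I_{\psi,q}([w_1])^{r(s-1)}I_{\psi,qr}([w_1])$ and $I_{\widetilde\psi_q,1}=\widetilde\theta_q$. Because $\widetilde\theta_q$ carries no power of $I_{\psi,1}$, the combination $I_{\widetilde\psi_q,1}^{r(1-s)}I_{\widetilde\psi_q,r}^s$ reduces, after the same cancellation of the $I_{\psi,q}$ powers, to $r_1^{nr\mathcal{T}_\psi(q)}I_{\psi,qr}([w_1])^s$. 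Summing and invoking (\ref{submul1}) at $qr$ gives the growth rate $r_1^{-n(\mathcal{T}_\psi(qr)-r\mathcal{T}_\psi(q))}$, i.e. $\beta_{\widetilde\psi_q}(r)=\mathcal{T}_\psi(qr)-r\mathcal{T}_\psi(q)$.

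There is no genuine obstacle here, only bookkeeping; the point I would be most careful about is never to manipulate the pressure $P$ directly but always through the two-sided comparison estimates (\ref{submul1})--(\ref{submul2}), so that the $\approx$ constants remain independent of $n$ and the limits defining $P(\log\psi_q)$ and $P(\log\widetilde\psi_q)$ are controlled. As a sanity check, specializing to $r=1$ returns $\beta_{\psi_q}(1)=0=\beta_{\widetilde\psi_q}(1)$, consistent with both normalized functions having pressure $0$, which confirms the signs and the bookkeeping of the exponents.
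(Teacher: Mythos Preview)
Your proof is correct and follows essentially the same route as the paper's own argument: compute $I_{\psi_q,1}=\theta_q$ and $I_{\psi_q,r}$ (respectively $I_{\widetilde\psi_q,1}=\widetilde\theta_q$ and $I_{\widetilde\psi_q,r}$) explicitly, observe that in the product $I_{\cdot,1}^{r(1-s)}I_{\cdot,r}^s$ the powers of $I_{\psi,q}$ cancel, then sum over $w_1$ and invoke (\ref{submul2}) (respectively (\ref{submul1})) at $qr$ to read off the pressure. Your exponent bookkeeping is accurate, and the sanity check at $r=1$ is a nice addition not present in the paper.
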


\begin{proof} Let $(q,r)\in\mathbb{R}$.

(1) By (\ref{submul2}) for $n\ge 1$ we have 
\begin{eqnarray*}
r_1^{-n\beta_{\psi_q}(r)} &\approx&  \sum_{w_1\in A_1^n}I_{\psi_q,1}([w_1])^{r(1-s)}I_{\psi_q,r}([w_1])^s\\\
&\approx& \sum_{w_1\in A_1^n}\ \Big [\sum_{w_2\in A_2^n}\psi_q([w_1]\times [w_2])\Big ]
^{r(1-s)}\Big [\sum_{w_2\in A_2^n}\psi_q([w_1]\times [w_2])^r\Big ]^s.
\end{eqnarray*}
Also, by definition we have
$$
\sum_{w_2\in A_2^n}\psi_q([w_1]\times [w_2])=\theta_q([w_1])=r_1^{n\beta_\psi(q)}I_{\psi,1}([w_1])^{q(1-s)}I_{\psi,q}([w_1])^s 
$$ 
as well as
\begin{eqnarray*}
\sum_{w_2\in A_2^n}\psi_q([w_1]\times [w_2])^r&=&\frac{\theta_q([w_1])^r}{I_{\psi,q}([w_1]^r}\sum_{w_2\in A_2^n}\psi([w_1]\times [w_2])^{qr}\\ 
&=& r_1^{nr\beta_\psi(q)}I_{\psi,1}([w_1])^{qr(1-s)}I_{\psi,q}([w_1])^{r(s-1)} I_{\psi,qr}([w_1]) 
.
\end{eqnarray*}
Inserting the two previous equalities in the approximation of $r_1^{-n\beta_{\psi_q}(r)}$ and using (\ref{submul2}) again we find that the powers of $I_{\psi,q}([w_1])$ vanish and 
\begin{eqnarray*}
r_1^{-n\beta_{\psi_q}(r)} \approx  r_1^{nr\beta_\psi(q)} \sum_{w_1\in A_1^n}I_{\psi,1}([w_1])^{qr(1-s)} I_{\psi,qr}([w_1])^s\approx r_1^{nr\beta_\psi(q)} r_1^{-n\beta_\psi(qr)} .
\end{eqnarray*}

\smallskip

(2) By (\ref{submul1}) for $n\ge 1$ we have 
\begin{eqnarray*}
r_1^{-n\beta_{\widetilde \psi_q}(r)} &\approx&  \sum_{w_1\in A_1^n}I_{\widetilde \psi_q,1}([w_1])^{r(1-s)}I_{\widetilde \psi_q,r}([w_1])^s\\\
&\approx& \sum_{w_1\in A_1^n}\ \Big [\sum_{w_2\in A_2^n}\widetilde \psi_q([w_1]\times [w_2])\Big ]
^{r(1-s)}\Big [\sum_{w_2\in A_2^n}\widetilde \psi_q([w_1]\times [w_2])^r\Big ]^s.
\end{eqnarray*}
Also, $\displaystyle \sum_{w_2\in A_2^n}\widetilde \psi_q([w_1]\times [w_2])=\widetilde \theta_q([w_1])=r_1^{n\mathcal{T}_\psi(q)}I_{\psi,q}([w_1])^s $ and
\begin{eqnarray*}
\sum_{w_2\in A_2^n}\widetilde \psi_q([w_1]\times [w_2])^r&=&\frac{\widetilde \theta_q([w_1])^r}{I_{\psi,q}([w_1]^r}\sum_{w_2\in A_2^n}\psi([w_1]\times [w_2])^{qr}\\ 
&=&r_1^{nr\mathcal{T}_\psi(q)}I_{\psi,q}([w_1])^{r(s-1)} I_{\psi,qr}([w_1]) 
.
\end{eqnarray*}
Inserting the two previous equalities in the approximation of $r_1^{-n\beta_{\widetilde \psi_q}(r)}$ and using (\ref{submul1}) again yields
$\displaystyle r_1^{-n\beta_{\widetilde \psi_q}(r)} \approx  r_1^{nr\mathcal{T}_\psi(q)} \sum_{w_1\in A_1^n}I_{\psi,qr}([w_1])^s\approx r_1^{nr\mathcal{T}_\psi(q)} r_1^{-n\mathcal{T}_\psi(qr)}$.
\end{proof}

\begin{corollary}\label{coro1}
The functions $\beta_\psi$ and $\mathcal{T}_\psi$ are differentiable.
\end{corollary}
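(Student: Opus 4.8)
The plan is to bootstrap the differentiability at the single point $1$, granted by Proposition~\ref{diff}, to every other point by means of the functional equations of Proposition~\ref{prop4}. The crucial observation is that these equations hold not only for $\psi$ but, since $\psi_q$ and $\widetilde\psi_q$ again belong to $AM(\mathscr{C}_{A_1\times A_2})$ and are restrictions of Gibbs measures (Section~\ref{aux}), Proposition~\ref{diff} may be applied verbatim to each of them.

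Fix $q_0\in\mathbb{R}\setminus\{0\}$. Applying Proposition~\ref{diff} to $\psi_{q_0}$ shows that $\beta_{\psi_{q_0}}$ is differentiable at $1$. Rewriting Proposition~\ref{prop4}(1) with $r=q/q_0$ gives, for $q$ near $q_0$, $\beta_\psi(q)=\beta_{\psi_{q_0}}(q/q_0)+(q/q_0)\beta_\psi(q_0)$; the first summand is differentiable at $q=q_0$ because $q/q_0$ is then near $1$ and $q_0\neq 0$, while the second is affine. Hence $\beta_\psi$ is differentiable at $q_0$. Applying the same reasoning to $\widetilde\psi_{q_0}$ and Proposition~\ref{prop4}(2) yields $\mathcal{T}_\psi(q)=\beta_{\widetilde\psi_{q_0}}(q/q_0)+(q/q_0)\mathcal{T}_\psi(q_0)$ near $q_0$, so $\mathcal{T}_\psi$ is differentiable at $q_0$ as well. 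This is precisely the sense in which the differentiability of $\mathcal{T}_\psi$ is a consequence of the ($\beta$-type) differentiability result of Proposition~\ref{diff}. Thus both $\beta_\psi$ and $\mathcal{T}_\psi$ are differentiable on $\mathbb{R}\setminus\{0\}$.

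The point $q_0=0$ is the genuinely delicate one, and I expect it to be the main obstacle. The difficulty is structural: the multiplicative tilting $q\mapsto\psi_q$ fixes the parameter $0$, so at $q_0=0$ the relation of Proposition~\ref{prop4}(1) collapses to $\beta_{\psi_0}(r)=(1-r)\beta_\psi(0)$, an affine function of $r$ that records nothing about the behaviour of $\beta_\psi$ near $0$; the transport argument therefore cannot reach $0$, and neither does Proposition~\ref{diff} applied to $\psi_0$, whose $\beta$-function is degenerate. Since $\beta_\psi$ is concave and already differentiable off $0$, its one-sided derivatives at $0$ both exist and satisfy $\beta_{\psi,-}'(0)\ge\beta_{\psi,+}'(0)$, so it remains only to exclude a corner at $0$. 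I would attempt this by a direct analysis at the base point $0$ rather than by transport: working with the finite-level approximations afforded by (\ref{submul2}) and the equilibrium states of the potentials $q(1-s)\log I_{\psi,1}+s\log I_{\psi,q}$ on $\mathbb{A}_1$, one tries to show that $\beta_{\psi,+}'(0)$ and $\beta_{\psi,-}'(0)$ are both obtained by integrating the same derivative potential $\partial_q\big(s\log I_{\psi,q}\big)\big|_{q=0}+(1-s)\log I_{\psi,1}$ against the common equilibrium state at $q=0$, forcing them to coincide. The same argument, with the $I_{\psi,1}$ factor suppressed, would treat $\mathcal{T}_\psi$ at $0$. The hard part is to make this limiting identification rigorous within the quasi-Bernoulli (rather than H\"older--Gibbs) setting, where the pressure need not be analytic and where the differentiability at $0$ is exactly what has to be earned.
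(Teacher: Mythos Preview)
Your treatment of $q_0\neq 0$ is exactly the paper's: apply Proposition~\ref{diff} to $\psi_{q_0}$ and $\widetilde\psi_{q_0}$, then read off differentiability at $q_0$ from the functional equations of Proposition~\ref{prop4}. No issue there.

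The gap is at $q_0=0$, and you have correctly identified that the transport argument says nothing here. However, your proposed route through equilibrium states and integration of a derivative potential is both more fragile and more elaborate than what is needed, and you yourself flag that making it rigorous in the quasi-Bernoulli setting is unclear. The paper's actual argument (following Heurteaux) is purely elementary and avoids equilibrium states entirely. The key observation you are missing is quantitative: the almost-multiplicativity constant of $[w_1]\mapsto I_{\psi,q}([w_1])^s$ is $C^{s|q|}$, because raising $\psi$ to the power $q$ turns the constant $C$ of (\ref{bernoulli}) into $C^{|q|}$. Hence, writing $\mathcal{T}_{\psi,n}(q)=-\frac{1}{n}\log_{r_1}\sum_{w_1\in A_1^n}I_{\psi,q}([w_1])^s$, one gets
\[
\big|(m+n)\mathcal{T}_{\psi,m+n}(q)-m\mathcal{T}_{\psi,m}(q)-n\mathcal{T}_{\psi,n}(q)\big|\le C'|q|
\]
with $C'$ independent of $q,m,n$, and therefore $|\mathcal{T}_\psi(q)-\mathcal{T}_{\psi,n}(q)|\le C'|q|/n$. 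Since $\mathcal{T}_{\psi,n}(0)=\mathcal{T}_\psi(0)$, the difference quotients $\frac{\mathcal{T}_\psi(q)-\mathcal{T}_\psi(0)}{q}$ and $\frac{\mathcal{T}_{\psi,n}(q)-\mathcal{T}_{\psi,n}(0)}{q}$ differ by at most $C'/n$ uniformly in $q\neq 0$. Each $\mathcal{T}_{\psi,n}$ is smooth, so $\mathcal{T}_{\psi,n}'(0)$ exists; letting $q\to 0^\pm$ and then $n\to\infty$ forces the one-sided derivatives of $\mathcal{T}_\psi$ at $0$ to coincide. The same computation, with the extra factor $I_{\psi,1}^{q(1-s)}$ contributing another $C^{|q|(1-s)}$ to the constant, handles $\beta_\psi$. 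This is the missing idea: the submultiplicativity defect \emph{vanishes linearly} at $q=0$, which upgrades pointwise convergence of $\mathcal{T}_{\psi,n}$ to control of the difference quotient at the origin.
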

\begin{proof}
Proposition~\ref{diff} implies that $\beta_{\psi_q}$ and $\beta_{\widetilde \psi_q}$ are differentiable at 1, so that parts (1) and (2) of Proposition~\ref{prop4} yield directly the differentiability of $\beta_\psi$ and $\mathcal{T}_\psi$ at $q\neq 0$ with the relations $\beta_{\psi_q}'(1)=q\beta_\psi'(q)-\beta_\psi(q)$ and $\beta_{\widetilde\psi_q}'(1)=q\mathcal{T}_\psi'(q)-\mathcal{T}_\psi (q)$. For the case $q=0$, we mimic the approach of \cite{He} for the $L^q$-spectrum of a quasi-Bernoulli measure. Property (\ref{submul1}) can be made precise as follows: There exists $C>0$ such that for all $n,m\ge 1$ and $q\in\mathbb{R}$ we have $
-C|q|\le (m+n)\mathcal{T}_{\psi,m+n}(q)-m\mathcal{T}_{\psi,m}(q)-n\mathcal{T}_{\psi,n}(q)\le C|q|$.
This yields $|\mathcal{T}_{\psi}(q)-\mathcal{T}_{\psi,n}(q)|\le C|q|/n$ for all $n\ge 1$ and $q\in\mathbb{R}$. Since $\mathcal{T}_{\psi,n}(0)=\mathcal{T}_{\psi}(0)$ for all $n\ge 1$, we have for all $n\ge 1$ and $q\in\mathbb{R}^*$, $\left |\frac{\mathcal{T}_{\psi}(q)-\mathcal{T}_{\psi}(0)}{q}-\frac{\mathcal{T}_{\psi,n}(q)-\mathcal{T}_{\psi,n}(0)}{q}\right |\le C/n$. Consequently, since $\mathcal{T}_\psi$ is concave and $\mathcal{T}_{\psi_n}'(0)$ exists for all $n\ge 1$, the previous inequality implies that the left and right derivatives of $\mathcal{T}_\psi$ at 0 are equal.  The same argument works for $\beta_\psi$.
\end{proof}

\subsection{Upper bounds for the dimensions} $\ $

Recall Proposition~\ref{prop1}.

\begin{proposition}\label{limsup}
Let $q\in\mathbb{R}$. For all $x\in \pi_1(\supp(\psi))$ we have 
$$
\min \left  (\limsup_{n\to\infty}\left (\frac{u_n(x)}{u_{g(n)}(x)^s}\right )^{1/n}, \limsup_{n\to\infty}\left (\frac{\widetilde u_n(x)}{\widetilde u_{g(n)}(x)^s}\right )^{1/n}\right )\ge 1.
$$
\end{proposition}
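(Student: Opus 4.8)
The plan is to show that \emph{each} of the two limsups is already $\ge 1$, so that the inequality for the minimum follows at once. The starting observation is that both sequences are values along $x$ of a single function in $AM(\mathscr{C}_{A_1})$. Indeed $\widetilde u_n(x)=I_{\psi,q}([x|n])$ with $I_{\psi,q}\in AM(\mathscr{C}_{A_1})$ by Proposition~\ref{AMcont1}(1), while $u_n(x)=I_{\psi,q}([x|n])\,I_{\psi,1}([x|n])^{-q}$, and $I_{\psi,q}\cdot I_{\psi,1}^{-q}$ again lies in $AM(\mathscr{C}_{A_1})$ as a product of $I_{\psi,q}$ with a real power of $I_{\psi,1}$ (Proposition~\ref{AMcont1}(1)); its support is still $\pi_1(\supp(\psi))$. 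Thus it is enough to prove the general fact: for any $\Phi\in AM(\mathscr{C}_{A_1})$ and any $x\in\supp(\Phi)$, one has $\limsup_{n\to\infty}\big(\Phi([x|n])/\Phi([x|g(n)])^s\big)^{1/n}\ge 1$.

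First I would dispose of the trivial case $s=1$ (that is $r_1=r_2$), where $g(n)=n$ and every ratio equals $1$. Assuming $s<1$, set $\phi_n=\log\Phi([x|n])$. Iterating the almost-multiplicativity (\ref{bernoulli}) down to single letters gives $\Phi([x|n])\approx\prod_{k=1}^n\Phi([x_k])$ up to a factor $C^{n}$; since $x\in\supp(\Phi)$ forces $\Phi([x_k])>0$ for every $k$, and $A_1$ is finite so these single-letter values are bounded away from $0$ and $\infty$, one obtains a constant $M$, uniform in $x$, with $|\phi_n|\le Mn$ for all large $n$. This exponential boundedness of $\phi_n/n$ is the crucial input.

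Suppose, for contradiction, that the limsup is $<1$: there are $\rho\in(0,1)$ and $N$ with $\phi_n-s\,\phi_{g(n)}\le n\log\rho$ for $n\ge N$. Writing $\sigma_m=\phi_m/m$ and using $g(n)=\lceil n/s\rceil$, so that $1\le s\,g(n)/n\le 1+s/n$, this reads $\sigma_n\le \kappa_n\,\sigma_{g(n)}+\log\rho$ with $\kappa_n=s\,g(n)/n$. I would then iterate along the orbit $n_0=n\ge N$, $n_{j+1}=g(n_j)$, noting $n_j\ge n_0\,s^{-j}$, which makes $\sum_j(\kappa_{n_j}-1)\le\sum_j s^{j+1}/n_0$ converge and hence $\prod_j\kappa_{n_j}$ bounded by some $K^*\ge1$. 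The iterated inequality then gives $\sigma_{n_0}\le K^* M+k\log\rho$ for every $k$ (each partial product of the $\kappa$'s being $\ge1$ and $\log\rho<0$), and letting $k\to\infty$ forces $\sigma_{n_0}=-\infty$, contradicting $|\sigma_{n_0}|\le M$. Applying this to $\Phi=I_{\psi,q}$ and to $\Phi=I_{\psi,q}\cdot I_{\psi,1}^{-q}$ completes the proof.

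The main obstacle is the control of the compounding factors $\kappa_n=s\,g(n)/n$ in the iteration: they exceed $1$, so a naive estimate would let their product diverge. The point is that the geometric growth $n_j\gtrsim s^{-j}n_0$, valid precisely because $s<1$, makes $\sum_j(\kappa_{n_j}-1)$ summable, and it is the interplay of this summability with the uniform bound $|\phi_n|\le Mn$ that converts the strictly negative increment $\log\rho$ into a genuine contradiction. Everything else — the reduction to an $AM(\mathscr{C}_{A_1})$ function and the boundedness of $\phi_n/n$ — is routine.
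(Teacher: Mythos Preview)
Your argument is correct. The reduction to a single $\Phi\in AM(\mathscr{C}_{A_1})$, the bound $|\phi_n|\le Mn$, and the iteration along $n_{j+1}=g(n_j)$ with the summability of $\kappa_{n_j}-1$ all check out; in particular the sign issue in bounding $P_k\sigma_{n_k}\le K^*M$ is fine since $P_k>0$ and $\sigma_{n_k}\le M$.

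The paper follows the same skeleton (exponential bounds on $v_n$, iteration along $n\mapsto g(n)$) but sidesteps your $\kappa_n$ bookkeeping by the factorization
\[
\Big(\frac{v_n}{v_{g(n)}^s}\Big)^{1/n}=\frac{v_n^{1/n}}{v_{g(n)}^{1/g(n)}}\cdot v_{g(n)}^{\,1/g(n)-s/n}.
\]
Since $a\le v_m^{1/m}\le b$ and $|1/g(n)-s/n|=O(1/n^2)$, the second factor tends to $1$, so it suffices to show $\limsup_n v_n^{1/n}/v_{g(n)}^{1/g(n)}\ge1$; but this is immediate, because if $v_n^{1/n}<\rho\,v_{g(n)}^{1/g(n)}$ for all large $n$ then iterating gives $v_{n_k}^{1/n_k}>\rho^{-k}a\to\infty$, contradicting the upper bound $b$. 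This McMullen trick absorbs the discrepancy $sg(n)\neq n$ into a harmless factor and makes the iteration trivial, whereas your route handles that discrepancy head-on via the convergent product of the $\kappa_{n_j}$. Both work; the factorization is shorter.
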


\begin{proof}
Let $v\in \{u,\widetilde u\}$ and $x\in \pi_1(\supp(\psi))$. We use an argument introduced in \cite{McMullen}. We write
$$
\left (\frac{v_n(x)}{v_{g(n)}(x)^s}\right )^{1/n}=\frac{v_n(x)^{1/n}}{v_{g(n)}(x)^{1/g(n)}}
v_{g(n)}(x)^{1/g(n)-s/n}.
$$
Now, since  by construction there exists $0<a<b<\infty$ independent of $x$ such that $a^n\le v_n(x)\le b^n$ for all $n\ge 1$, and $|n/g(n)-s |=O(1/n)$, we have $\lim_{n\to\infty} v_{g(n)}(x)^{1/g(n)-s/n}=1$. The conclusion then comes from the fact that since $s=\lim_{n\to\infty} n/g(n)<1$, we must have $\limsup_{n\to\infty}\frac{v_n(x)^{1/n}}{v_{g(n)}(x)^{1/g(n)}}\ge 1$ because $v_n(x)^{1/n}$ is bounded away from 0. 
\end{proof}

\begin{corollary}\label{corupper}
For all $\alpha\ge 0$, we have $\dim\, E_{\mu}(\alpha)\le \beta_\psi^*(\alpha)$ and $\dim\, E_{\psi}(\alpha)\le \mathcal{T}_\psi^*(\alpha)$, a negative dimension meaning that the set is empty.
\end{corollary}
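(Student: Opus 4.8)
The plan is to prove both inequalities by one and the same covering argument: I carry it out for $E_\mu(\alpha)$ using the auxiliary measure $\mu_q$ and Proposition~\ref{prop1}(1), and then transcribe it for $E_\psi(\alpha)$ using $\widetilde\mu_q$ and Proposition~\ref{prop1}(2), replacing $\beta_\psi$, $u_n$ and $\mu\big(B_n(x,y)\big)$ by $\mathcal{T}_\psi$, $\widetilde u_n$ and $\psi([x|n]\times[y|n])$ throughout. Fix $q\in\mathbb{R}$ and $\epsilon>0$. For $N\ge 1$, let $E_\mu(\alpha,N,\epsilon)$ be the set of $(x,y)\in\supp(\psi)$ such that $r_2^{-n(\alpha+\epsilon)}\le \mu\big(B_n(x,y)\big)\le r_2^{-n(\alpha-\epsilon)}$ for all $n\ge N$. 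Since $E_\mu(\alpha)\subset\bigcup_{N\ge 1}E_\mu(\alpha,N,\epsilon)$ and Hausdorff dimension is countably stable, it is enough to bound $\dim E_\mu(\alpha,N,\epsilon)$ for each fixed $N$.

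First I would record the two exponential estimates available on $E_\mu(\alpha,N,\epsilon)$. Whatever the sign of $q$, the membership $r_2^{-n(\alpha+\epsilon)}\le\mu\big(B_n(x,y)\big)\le r_2^{-n(\alpha-\epsilon)}$ gives $\mu\big(B_n(x,y)\big)^q\ge r_2^{-n\alpha q}r_2^{-n|q|\epsilon}$ for $n\ge N$. In parallel, Proposition~\ref{limsup} guarantees that every $x\in\pi_1(\supp(\psi))$ possesses infinitely many scales $n$---the \emph{good scales}, depending on $x$---along which $u_n(x)/u_{g(n)}(x)^s\ge r_2^{-n\epsilon}$. Inserting both estimates into Proposition~\ref{prop1}(1) yields a constant $C>0$, depending only on $\psi$ and $q$, such that at every good scale $n\ge N$ of a point $(x,y)\in E_\mu(\alpha,N,\epsilon)$,
$$
\mu_q\big(B_n(x,y)\big)\ge C^{-1}\,r_2^{-n(\alpha q-\beta_\psi(q))}\,r_2^{-n(|q|+1)\epsilon}.
$$
Writing $t=\alpha q-\beta_\psi(q)+(|q|+1)\epsilon$ and recalling $|B_n(x,y)|=r_2^{-n}$, this is exactly $|B_n(x,y)|^t\le C\,\mu_q\big(B_n(x,y)\big)$.

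Next I would run the covering. Given $\delta>0$, attach to each point of $E_\mu(\alpha,N,\epsilon)$ a good ball $B_n(x,y)$ with $r_2^{-n}<\delta$ (possible since the good scales are unbounded). As any two balls from $\bigcup_m\mathcal{F}_m$ are either nested or disjoint, the maximal members of the resulting family form a disjoint cover $\{B_i\}$ of $E_\mu(\alpha,N,\epsilon)$ by good balls of diameter $<\delta$. Since $\mu_q$ is a probability measure, $\sum_i|B_i|^t\le C\sum_i\mu_q(B_i)\le C$, whence $\mathcal{H}^t_\delta\big(E_\mu(\alpha,N,\epsilon)\big)\le C$; letting $\delta\to0$ gives $\mathcal{H}^t\big(E_\mu(\alpha,N,\epsilon)\big)<\infty$, so $\dim E_\mu(\alpha,N,\epsilon)\le t$ for every $N$ and therefore $\dim E_\mu(\alpha)\le t=\alpha q-\beta_\psi(q)+(|q|+1)\epsilon$. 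Letting $\epsilon\to0$ and then taking the infimum over $q$ produces $\dim E_\mu(\alpha)\le\beta_\psi^*(\alpha)$. For the emptiness statement, if $\beta_\psi^*(\alpha)<0$ I would fix $q$ with $\alpha q-\beta_\psi(q)<0$ and $\epsilon$ so small that $t<0$: a single good ball at scale $n$ of a hypothetical point of $E_\mu(\alpha,N,\epsilon)$ would satisfy $r_2^{n|t|}\le C\,\mu_q\big(B_n(x,y)\big)\le C$, forcing $n$ to stay bounded and contradicting the unboundedness of the good scales; hence $E_\mu(\alpha,N,\epsilon)=\emptyset$ for every $N$ and $E_\mu(\alpha)=\emptyset$.

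The sign bookkeeping in $q$ and the extraction of a disjoint cover are routine, the latter being especially clean because of the ultrametric (tree) structure of the balls $B_n$. The one genuinely delicate point---and the precise reason Proposition~\ref{limsup} is invoked---is the anisotropic factor $u_n(x)/u_{g(n)}(x)^s$ (respectively $\widetilde u_n(x)/\widetilde u_{g(n)}(x)^s$): it is not controlled at every scale, so the cover must be assembled only along the good scales, where this factor is subexponentially close to $1$ from below. With that in hand, the identical scheme applied to $E_\psi(\alpha)$ through Proposition~\ref{prop1}(2) delivers $\dim E_\psi(\alpha)\le\mathcal{T}_\psi^*(\alpha)$, together with $E_\psi(\alpha)=\emptyset$ when $\mathcal{T}_\psi^*(\alpha)<0$.
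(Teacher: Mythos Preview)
Your proof is correct and follows essentially the same approach as the paper: both use Proposition~\ref{prop1} together with Proposition~\ref{limsup} to show that along infinitely many scales $\mu_q(B_n(z))\gtrsim |B_n(z)|^{\alpha q-\beta_\psi(q)}$ (resp.\ $\widetilde\mu_q(B_n(z))\gtrsim |B_n(z)|^{\alpha q-\mathcal{T}_\psi(q)}$), and then a covering argument. The only cosmetic difference is that the paper packages the covering step as a separate lemma (Lemma~\ref{falc}) and handles possible overlaps with an extra $\varepsilon$ in the exponent, whereas you extract a disjoint cover directly from the ultrametric structure.
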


\begin{proof}
We need the following lemma. It is a simple adaptation to $\mathbb{A}_1\times \mathbb{A}_2$ of Proposition 4.9(b) of  \cite{Falcbook} which enables to find upper bounds for the Hausdorff dimension of  sets in $\mathbb{R}^n$. 
\begin{lemma}\label{falc}
Let $F$  be non-empty subset of $\mathbb{A}_1\times \mathbb{A}_2$. Suppose that there exists a positive Borel measure $\nu$ on $\mathbb{A}_1\times \mathbb{A}_2$ and $\gamma\ge 0$ such that $\displaystyle \liminf_{n\to\infty}\frac{\log\nu(B_n(z))}{\log |B_n(z)|}\le \gamma$ for all $z\in F$. Then $\dim\, F\le \gamma$. 
\end{lemma}
Let $\alpha\in\mathbb{R}$ and $q\in\mathbb{R}$. It follows from Proposition~\ref{limsup} that if $E_\mu(\alpha)$ is not empty, then for all $z\in E_\mu(\alpha)$ we have $\displaystyle \liminf_{n\to\infty}\frac{\log_{r_2}\mu_q(B_n(z))}{-n}\le \alpha q-\beta_\psi(q)$. Then Lemma~\ref{falc} yields $\dim\, E_\mu(\alpha)\le \alpha q-\beta_\psi(q)$. Since this holds for all $q\in\mathbb{R}$, we have the desired conclusion. Similarly, if $E_\psi(\alpha)$ is not empty, then for all $q\in\mathbb{R}$ and $z\in E_\psi(\alpha)$ we have $\displaystyle \liminf_{n\to\infty}\frac{\log_{r_2}\widetilde\mu_q(B_n(z))}{-n}\le \alpha q-\mathcal{T}_\psi(q)$.
\end{proof}

\noindent
{\it Proof of Lemma~\ref{falc}.} Fix $\varepsilon>0$. Let $\delta>0$ and for all $z\in F$ let $n_z\ge 1$ such that the diameter $r_2^{-n_z}$ of $B_{n_z}(z)$  is less than $\delta$ and $|B_{n_z}(z)|^{\gamma+\varepsilon}\le \nu (B_{n_z}(z))$. Let $\mathcal{F}=\{ B_{n_z}(z):\ z\in F\}$. The elements of $\mathcal{F}$ form a $\delta$-covering of $F$ and 
$$
\sum_{B\in\mathcal{F}}|B|^{\gamma+2\varepsilon}\le \sum_{B\in\mathcal{F}}r_2^{-n_z\varepsilon}\nu (B_{n_z}(z))\le \sum_{n\ge 1}r_2^{-n}\sum_{B\in\mathcal{F}_n}\nu (B)\le \sum_{n\ge 1}r_2^{-n}\|\nu\|<\infty.$$
It results from this finite bound independent of $\delta$ that $\dim\, F\le \gamma+2\varepsilon$. Since this holds for all $\varepsilon>0$, we have the desired conclusion. 
 
\subsection{Lower bounds for the dimensions}

\begin{proposition}\label{prop5}
Let $q\in\mathbb{R}$. We have 
$$
\begin{cases}
\mu_q\big (E_{\mu_q}(\beta_\psi^*(\beta_\psi'(q)))\big )=\mu_q\big (E_{\mu}(\beta_\psi'(q))\big )=1,\\
\widetilde \mu_q\big (E_{\widetilde \mu_q}( \mathcal{T}_\psi^*(\mathcal{T}_\psi'(q)))\big )=\widetilde\mu_q\big (E_{\psi}(\mathcal{T}_\psi'(q))\big )=1.
\end{cases}
$$
Consequently, $\dim\, E_{\mu}(\beta_\psi'(q))\ge \beta_\psi^*(\beta_\psi'(q))$ and $\dim\, E_{\psi}(\mathcal{T}_\psi'(q))\ge \mathcal{T}_\psi^*(\mathcal{T}_\psi'(q))$.
\end{proposition}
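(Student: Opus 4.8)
The plan is to use, for each $q$, the auxiliary measures $\mu_q$ and $\widetilde\mu_q$ of Section~\ref{aux} as the candidate measures carried by $E_\mu(\beta_\psi'(q))$ and $E_\psi(\mathcal{T}_\psi'(q))$. For each of them I would prove two things: that it is monodimensional with dimension equal to the announced Legendre value, and that it gives full mass to the corresponding level set. The lower bounds on the Hausdorff dimensions then follow from the mass distribution principle, exactly as in the last assertion of Proposition~\ref{diff}: a Borel set carrying positive mass of a monodimensional measure has dimension at least that of the measure.

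First I would treat the two left-hand equalities. As $\mu_q$ and $\widetilde\mu_q$ are Gibbs measures of the class of Section~\ref{AM}, Proposition~\ref{diff} applies to them and shows they are monodimensional, with respective dimensions $\beta_{\psi_q}'(1)$ and $\beta_{\widetilde\psi_q}'(1)$; that is, $\log_{r_2}\mu_q(B_n(z))/(-n)\to\beta_{\psi_q}'(1)$ for $\mu_q$-a.e.\ $z$, and analogously for $\widetilde\mu_q$. By the computation in the proof of Corollary~\ref{coro1} (which rests on Proposition~\ref{prop4}), $\beta_{\psi_q}'(1)=q\beta_\psi'(q)-\beta_\psi(q)$ and $\beta_{\widetilde\psi_q}'(1)=q\mathcal{T}_\psi'(q)-\mathcal{T}_\psi(q)$. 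Since $\beta_\psi$ and $\mathcal{T}_\psi$ are concave and differentiable (Proposition~\ref{AMcont1}(2), Corollary~\ref{coro1}), the infima defining $\beta_\psi^*(\beta_\psi'(q))$ and $\mathcal{T}_\psi^*(\mathcal{T}_\psi'(q))$ are attained at $q$, so these two Legendre values equal $\beta_{\psi_q}'(1)$ and $\beta_{\widetilde\psi_q}'(1)$ respectively. This gives $\mu_q(E_{\mu_q}(\beta_\psi^*(\beta_\psi'(q))))=1$ and $\widetilde\mu_q(E_{\widetilde\mu_q}(\mathcal{T}_\psi^*(\mathcal{T}_\psi'(q))))=1$.

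The crux is to transfer these to the level sets of $\mu$ and $\psi$. Here I would use Proposition~\ref{prop1}: taking $\log_{r_2}$ and dividing by $-n$ in part (1) gives, $\mu_q$-a.e.,
\[\frac{\log_{r_2}\mu_q(B_n)}{-n}=q\,\frac{\log_{r_2}\mu(B_n)}{-n}-\beta_\psi(q)-\frac1n\log_{r_2}\frac{u_n(x)}{u_{g(n)}(x)^s}+O(1/n),\]
with the parallel identity from part (2) involving $\widetilde\mu_q(B_n)$, $\psi([x|n]\times[y|n])$ and $\widetilde u_n$. The key point is that the correction term tends to $0$ almost everywhere: since $I_{\psi,q},I_{\psi,1}\in AM(\mathscr{C}_{A_1})$ and $\mu_q\circ\pi_1^{-1}$, being strongly equivalent to the ergodic measure $\theta_q$, is ergodic for $\sigma_1$, the almost-multiplicativity together with Kingman's subadditive ergodic theorem yields a.e.\ convergence of $\frac1n\log I_{\psi,q}([x|n])$ and $\frac1n\log I_{\psi,1}([x|n])$, hence $\frac1n\log u_n(x)\to L$ for some constant $L$; combined with $s\,g(n)/n\to 1$ this forces $\frac1n\log(u_n/u_{g(n)}^s)\to L-L=0$ (and likewise for $\widetilde u_n$ under $\widetilde\mu_q$). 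For $q\neq0$ the displayed identity can then be solved for $\log_{r_2}\mu(B_n)/(-n)$, whose limit is necessarily $\beta_\psi'(q)$, so $\mu_q(E_\mu(\beta_\psi'(q)))=1$; the tilde identity similarly gives $\log_{r_2}\psi([x|n]\times[y|n])/(-n)\to\mathcal{T}_\psi'(q)$ and $\widetilde\mu_q(E_\psi(\mathcal{T}_\psi'(q)))=1$. The value $q=0$ is the delicate exception, since the factor $q$ makes the relation vacuous; there I would instead compute the a.e.\ limit of $\log_{r_2}\mu(B_n)/(-n)$ directly from $\mu(B_n(x,y))\approx\psi([x|n]\times[y|n])\,I_{\psi,1}([x|g(n)])/I_{\psi,1}([x|n])$ (Proposition~\ref{AMcont}) by applying Kingman's theorem to each almost-additive factor under $\mu_0$, the identification of the resulting constant with $\beta_\psi'(0)$ being the most delicate book-keeping.

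Finally, with $\mu_q$ monodimensional of dimension $\beta_\psi^*(\beta_\psi'(q))$ and carried by $E_\mu(\beta_\psi'(q))$ (and $\widetilde\mu_q$ monodimensional of dimension $\mathcal{T}_\psi^*(\mathcal{T}_\psi'(q))$ and carried by $E_\psi(\mathcal{T}_\psi'(q))$), the mass distribution principle of Proposition~\ref{diff} yields $\dim E_\mu(\beta_\psi'(q))\ge\beta_\psi^*(\beta_\psi'(q))$ and $\dim E_\psi(\mathcal{T}_\psi'(q))\ge\mathcal{T}_\psi^*(\mathcal{T}_\psi'(q))$. I expect the localization step—forcing the correction terms $u_n/u_{g(n)}^s$ to vanish almost surely and separately settling $q=0$—to be the main obstacle, the monodimensionality being essentially a corollary of Propositions~\ref{diff} and~\ref{prop4}.
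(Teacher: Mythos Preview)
Your approach for $q\neq 0$ is exactly the paper's: monodimensionality of $\mu_q$ and $\widetilde\mu_q$ via Proposition~\ref{diff}, identification of their dimensions via Proposition~\ref{prop4} and Corollary~\ref{coro1}, and then Proposition~\ref{prop1} combined with Kingman's theorem to kill the correction $\frac1n\log(u_n/u_{g(n)}^s)$ and transfer to $E_\mu$ and $E_\psi$. (One small wording issue: you invoke ergodicity of $\mu_q\circ\pi_1^{-1}$, but what you actually need, and what the paper uses, is ergodicity of $\mu_q$ itself applied to the functions $u_n\circ\pi_1$.)

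The one substantive difference is at $q=0$. You propose to compute the $\mu_0$-a.e.\ limit of $\log_{r_2}\mu(B_n)/(-n)$ directly and then identify the resulting constant with $\beta_\psi'(0)$, correctly flagging this identification as the delicate point. The paper sidesteps this computation entirely: it first uses Kingman (applied to $\psi([x|n]\times[y|n])$ and $\theta_1([x|n])$) only to establish that \emph{some} constants $\alpha,\widetilde\alpha$ exist with $\mu_0(E_\mu(\alpha))=\mu_0(E_\psi(\widetilde\alpha))=1$, and then argues indirectly that $\alpha=\beta_\psi'(0)$ and $\widetilde\alpha=\mathcal{T}_\psi'(0)$ by combining the lower bound $\dim E_\mu(\alpha)\ge -\beta_\psi(0)$ (from the McMullen measure $\mu_0$) with the upper bound of Corollary~\ref{corupper}, which forces $\beta_\psi^*(\alpha)\ge -\beta_\psi(0)$ and hence $\alpha=\beta_\psi'(0)$ since $\beta_\psi^*$ attains its maximum only there. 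This indirect route is cleaner and avoids the book-keeping you anticipate.
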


\begin{proof}
Suppose first that $q\neq 0$. Due to Proposition~\ref{prop4} and its Corollary~\ref{coro1} we know that $\beta_{\psi_q}'(1)=q\beta_\psi'(q)-\beta_\psi(q)=\beta_\psi^*(\beta_\psi'(q))$ and $\beta_{\widetilde \psi_q}'(1)=q\mathcal{T}_\psi'(q)-\mathcal{T}_\psi(q)= \mathcal{T}_\psi^*(\mathcal{T}_\psi'(q))$. Thus, Proposition~\ref{diff} yields $\mu_q\big (E_{\mu_q}(q\beta_\psi'(q)-\beta_\psi(q))\big )=1$ and $\widetilde \mu_q\big (E_{\widetilde \mu_q}(q\mathcal{T}_\psi'(q)-\mathcal{T}_\psi(q))\big )=1$. 


Now, for $x\in \pi_1(\supp(\psi))$ let $(u_n(x))_{n\ge 1}$ and $(\widetilde u_n(x))_{n\ge 1}$ be defined as in Proposition~\ref{prop1}. Proposition~\ref{AMcont1}(1) implies $u_{n+p}(x)\approx u_n(x)u_p(\sigma_1^n(x))$ for all $n,p\ge 1$ and $x\in\pi_1(\supp(\psi))$. The same property holds for $(\widetilde u_n(\cdot))_{n\ge 1}$. Consequently, $\mu_q$ and $\widetilde\mu_q$ being ergodic, Kingman's submultiplicative ergodic theorem (\cite{K}) applied to $u_n\circ \pi_1$ and $\mu_q$ and then to $\widetilde u_n\circ \pi_1$ and $\widetilde \mu_q$ yields $l_q$ and $\widetilde l_q$ in $\mathbb{R}$ such that $\lim_{n\to\infty}\frac{\log u_n(x)}{n}=l_q$, $\mu_q$ almost-everywhere (a.\,e.), and $\lim_{n\to\infty}\frac{\log \widetilde u_n(x)}{n}=\widetilde l_q$, $\widetilde \mu_q$-a.\,e.. Since $\lim_{n\to\infty} n/g(n)=s$, this implies $\lim_{n\to\infty}\frac{1}{n}\log \left(\frac{u_n(x)}{u_{g(n)}(x)^s}\right )=0$, $\mu_q$-a.\,e., and $\lim_{n\to\infty}\frac{1}{n}\log \left(\frac{\widetilde u_n(x)}{\widetilde u_{g(n)}(x)^s}\right )=0$, $\widetilde \mu_q$-a.\,e.. 

Then, it follows from Proposition~\ref{prop1} that for $\mu_q$-a.\,e. $(x,y)\in\supp(\psi)$, since $(x,y)\in E_{\mu_q}(q\beta_\psi'(q)-\beta_\psi(q))$ and $q\neq 0$, we have $(x,y)\in E_{\mu}(\beta_\psi'(q))$; similarly since  $(x,y)\in E_{\widetilde \mu_q}(q\mathcal{T}_\psi'(q)-\mathcal{T}_\psi(q))$, we have $(x,y)\in E_{\psi}(\mathcal{T}_\psi'(q))$. Thus $\mu_q(E_{\mu}(\beta_\psi'(q))=1$ and $\widetilde\mu_q(E_{\psi}(\mathcal{T}_\psi'(q))=1$. The lower bounds for $\dim\, E_{\mu}(\beta_\psi'(q))$ and $\dim\, E_{\psi}(\mathcal{T}_\psi'(q))$ come now from Proposition~\ref{diff}.

\medskip

If $q=0$, we first notice that $\beta_\psi (0)=\mathcal{T}_\psi(0)$. Also $\mu_0=\widetilde\mu_0$ and this measure, which belongs to the class of multinomial measures, is the measure considered in \cite{McMullen} to find a sharp lower bound for $\dim\, \supp(\psi)$ (\cite{McMullen} deals with the self-affine Sierpinski carpet but its result has a direct transposition to the self-affine symbolic space). It turns out that $\displaystyle \dim\, \supp(\mu_0)=\dim\, \supp(\psi)=-\beta_\psi (0)=-\mathcal{T}_\psi(0)=\log_{r_1}\sum_{a_1\in A_1}( \#\{a_2\in A_2:\ [a_1]\times[a_2]\cap \supp(\psi)\neq\emptyset\})^s$. Moreover, if $B$ is a Borel set of positive $\mu_0$-measure, then $\dim\, B\ge -\beta_\psi (0)$. 

Let us suppose for a while that there exists $(\alpha,\widetilde \alpha)\in\mathbb{R}_+^2$ such that the sets $E_{\mu}(\alpha)$ and $E_\psi(\widetilde \alpha)$ are of full $\mu_0$-measure. We then necessarily  have $\dim\, E_{\mu}(\alpha)=\dim\, E_\psi(\widetilde \alpha)= -\beta_\psi (0)$. Moreover, it follows from Corollary~\ref{corupper} that if $\alpha\neq \beta_\psi'(0)$ then $\dim\, E_\mu(\alpha)<-\beta_\psi (0)$, and if $\widetilde \alpha \neq \mathcal{T}_\psi'(0)$ then $\dim\, E_\psi(\alpha)<-\mathcal{T}_\psi(0)$. Indeed, $\beta_\psi^*$ and $\mathcal{T}_\psi^*$ reach their absolute maximum at $\beta_\psi'(0)$ and $\mathcal{T}_\psi'(0)$ respectively. Consequently, $\alpha=\beta_\psi'(0)$ and $\widetilde\alpha=\mathcal{T}_\psi'(0)$. 

The existence of $\alpha$ and $\widetilde \alpha$ is obtained as follows. By Proposition~\ref{AMcont}(2), there exists $C>0$ such that if $(x,y)\in \supp(\psi)$ and $n\ge 1$ we have 
$$
\mu(B_n(x,y))\approx  \psi([x|n]\times[y|n]) \frac{\theta_1([x|g(n)])}{\theta_1([x|n])}.
$$
Moreover, since $\psi\in AM(\mathscr{C}_{A_1\times A_2})$ and $\theta_1\in AM(\mathscr{C}_{A_1})$, we have $\psi([x|n+p]\times[y|n+p])\approx \psi([x|n]\times[y|n])\psi([\sigma_1^n(x)|p]\times[\sigma_2(y)|p])$ and $\theta_1([x|n+p])\approx \theta_1([x|n])\theta_1([\sigma_1^n(x)|p])$ for all $n,p\ge 1$ and $(x,y)\in\supp(\psi)$. Hence, Kingman's ergodic theorem applied to $(x,y)\mapsto\psi([x|n]\times [y|n])$, $(x,y)\mapsto \theta_1([x|n])$ and the ergodic measure $\mu_0$ yields the conclusion.
\end{proof}

\section{Proofs of Theorems~\ref{proj1} and \ref{proj2}}\label{proof2}

\noindent
{\it Proof of Theorem~\ref{proj1}.}  Let $\widetilde \varphi$, $\varphi$ and $\psi$ be as in Section~\ref{proj11}. Let us make the following observations. On the one hand the projection of a ball of $\mathbb{A}_1\times\mathbb{A}_2$ in the torus is an almost square of comparable diameter, and the family of these projections can be used to cover sets when estimating their Hausdorff dimension. On the other hand the projection of a Gibbs measure $\mu$ on $\mathbb{T}$ does not affect a positive mass to the boundary of such almost squares (this follows for instance from Proposition 4.1 in \cite{barmen}). Consequently, the $\mu$-mass of a ball is equal to the $\mu \circ\widetilde\pi^{-1}$-mass of its projection. Another point is that for $z\in K$, $S_n\varphi(z)/n$ or equivalently $\log \psi ([z|n])/n$ possesses a limit as $n\to\infty$ if and only if it is the case for $S_n\widetilde \varphi (\widetilde \pi (z))$.  Finally, the measures $\widetilde \mu_q\circ\widetilde\pi^{-1}$ can be used to transpose to the sets $E_{\widetilde \varphi}(\beta)$ the results of Corollary~\ref{corupper} and Proposition~\ref{prop5} concerning the sets $E_\psi(\alpha)$. 

\medskip

\noindent
{\it Proof of Theorem~\ref{proj2}.} We said in the introduction that the same arguments as those used in the proof of Theorem 1.2 of \cite{barmen} work in the more general situation studied in this paper. These arguments are too long to be sketched here but they are exposed in detail in~\cite{barmen}.

\medskip


\end{document}